\documentclass[11pt]{article}

\usepackage[a4paper,margin=2.6cm]{geometry}
\usepackage{amsmath,amssymb,amsthm,mathtools}
\usepackage{booktabs}
\usepackage{enumitem}
\usepackage{microtype}
\usepackage[hidelinks]{hyperref}
\usepackage{bm}
\usepackage{multirow}

\newtheorem{theorem}{Theorem}[section]
\newtheorem{lemma}[theorem]{Lemma}
\newtheorem{corollary}[theorem]{Corollary}

\theoremstyle{remark}
\newtheorem{remark}[theorem]{Remark}

\title{Optimal Energy-Norm
Convergence for the Dynamic Diffusion Finite Element Method}

\author{
Isaac P. Santos\thanks{Corresponding author:
\href{mailto:isaac.santos@ufes.br}{\texttt{isaac.santos@ufes.br}}}
\\ 
\small Applied Mathematics Department (DMA)\\
\small Graduate Program in Informatics (PPGI)\\
\small Federal University of Espírito Santo (UFES), Brazil
}

\date{September 2026}

\begin{document}
\maketitle

\begin{abstract}
We revisit the nonlinear two-scale Dynamic Diffusion (DD) finite element
formulation mathematically analyzed by Santos et al. (2021) for stationary
advection--diffusion--reaction problems. We establish an explicit local
Lipschitz estimate for the artificial diffusivity, with a constant of order
$h_T$, and use it to prove uniqueness of the discrete solution for
sufficiently fine meshes. By separating the approximation error in the
energy norm from the contribution associated with artificial diffusion, we
derive an optimal first-order a priori energy-norm estimate for continuous
piecewise linear finite elements enriched with simplex bubble functions. We
further prove that the square root of the nonlinear artificial dissipation is
$O(h)$. Consequently, the combined
energy-error and artificial-dissipation measure also converges with first
order, sharpening the previously available $O(h^{1/2})$ estimate. A smooth
manufactured problem, considered with and without reaction, corroborates the
predicted convergence rates. A second, strongly advection-dominated problem
with sharp outflow layers illustrates the stabilizing behavior of the DD
formulation.
\end{abstract}

\medskip
\noindent\textbf{Keywords:} Dynamic Diffusion method; nonlinear artificial
diffusion; stabilized finite element method; local Lipschitz continuity;
discrete uniqueness; optimal energy-norm convergence;
advection--diffusion--reaction equations.

\section{Introduction}

The numerical approximation of advection-dominated transport problems
remains challenging due to the presence of sharp internal and boundary
layers and the resulting multiscale character of the solution. In such
regimes, standard Galerkin finite element discretizations may exhibit
nonphysical oscillations, motivating the development of a wide variety
of stabilized formulations, including streamline-upwind, residual-based,
multiscale, and nonlinear shock-capturing methods
\cite{brooks-hughes:82,hughes-etal:04,galeao-dudu:88,knopp-etal:02,john-knobloch:07,john-knobloch:08,Barrenechea-etal:2017}. These approaches have
provided effective stabilization strategies for advection-dominated
problems. Nevertheless, the construction of methods that introduce
sufficient dissipation to control spurious oscillations while avoiding
excessive artificial diffusion remains an important issue, particularly
in the presence of sharp layers. In nonlinear stabilization strategies,
this balance is closely related to the mechanism used to determine the
amount and location of the added diffusion.

The Dynamic Diffusion (DD) methodology emerged within this context as a
nonlinear two-scale finite element approach in which the amount of
additional diffusion is determined locally and dynamically from the
resolved-scale solution
\cite{Santos-etal:2010,Valli-etal:2018}.
The formulation employs an enriched $P_1$-bubble finite element space, decomposed into a resolved continuous piecewise linear space and an unresolved element-bubble space. Its nonlinear artificial-diffusion coefficient is evaluated from the resolved component, whereas the resulting diffusion operator acts on the full enriched discrete approximation. An important feature of the method is that
the artificial-diffusion contribution is not prescribed through a tuned
stabilization parameter, but is determined from local information
associated with the discrete residual. Wherever this additional contribution vanishes, locally on an element or globally throughout the mesh, the DD formulation coincides with the corresponding Galerkin formulation in the enriched \(P_1\)-bubble space.

A mathematical analysis of a bounded variant of the nonlinear two-scale
DD formulation was developed in \cite{Santos-etal:2021}. In that work,
existence of discrete solutions, stability, and a priori error estimates
were established for stationary advection--diffusion--reaction problems.

The resulting analysis provided an \(O(h^{1/2})\) estimate for a combined quantity involving the energy error and the contribution of the nonlinear artificial-diffusion operator. By nonnegativity of the latter contribution, this estimate also yields an \(O(h^{1/2})\) bound for the energy error, but it does not determine whether the energy error alone converges at a higher rate. Such half-order a priori bounds are also encountered in the analysis of nonlinear shock-capturing discretizations; see, for instance, \cite{Allendes-etal:2017}. In contrast, the numerical experiments in \cite{Santos-etal:2021} consistently displayed first-order convergence in the energy norm, together with optimal convergence rates in the \(H^1\)-seminorm and the \(L^2\)-norm. The sharp a priori convergence rate of the energy error therefore remained unresolved.

The nonlinear dependence of the artificial-diffusion operator on the
discrete solution also raises the question of uniqueness. This issue was
subsequently addressed by Du et al.~\cite{Du-etal:2026}, who proved
existence and uniqueness for the DD formulation analyzed in
\cite{Santos-etal:2021} by means of a contraction-mapping argument for sufficiently fine meshes. Their work also developed a
residual-based a posteriori error estimator and an adaptive linearized DD
strategy. Du et al.~explicitly identified the remaining limitation of
the analysis: ``The proof of the optimal convergence rate is still an open
problem, but it is not a concern in this article''
\cite{Du-etal:2026}. Thus, uniqueness of the discrete DD solution was no longer open, whereas
an optimal a priori convergence result for the bounded formulation of
\cite{Santos-etal:2021} remained unavailable.

In a related contribution, Du, Pan, and Xie
\cite{Du-etal:2026b} introduced a modified artificial-diffusion
coefficient and established existence, 
uniqueness under a mesh condition, and an
optimal first-order estimate for a combined quantity involving the energy
error and a dissipative contribution. Their result confirms that suitable
control of the solution dependence of the nonlinear artificial diffusion
is sufficient to recover optimal convergence. At the same time, these developments motivate a closer examination of the approximation properties of the formulation analyzed in \cite{Santos-etal:2021}. In particular, it is relevant to determine whether the artificial-diffusion operator of that formulation already supports an optimal energy-norm estimate under suitable assumptions.

The present work shows that the DD formulation analyzed in \cite{Santos-etal:2021}, with the same discrete structure and nonlinear artificial-diffusion operator, admits a sharper convergence analysis than previously established. The analysis yields three main advances.

First, we establish a local Lipschitz estimate for the artificial
diffusivity on the resolved-scale space, with a constant of order
$h_T$. The stability properties of the resolved-scale projection
transfer this estimate to the full enriched finite element space. The
resulting bound is then used in a direct comparison of two discrete
solutions to prove uniqueness for sufficiently fine meshes. This
argument differs from the contraction-mapping framework employed in
\cite{Du-etal:2026}, although both approaches yield mesh-dependent
sufficient conditions for uniqueness.

Second, the earlier $O(h^{1/2})$ estimate in
\cite{Santos-etal:2021} concerns a stronger combined quantity involving the
energy error and the square root of the artificial dissipation. This estimate
immediately yields an $O(h^{1/2})$ bound for the energy error, but it does not
establish whether the energy error alone converges at the higher first-order
rate observed numerically. By separating these contributions in the error
analysis, we first prove the optimal first-order estimate
\[
\|u-u_{hb}\|_E=O(h)
\]
for the DD formulation. Using the energy-norm stability of
the resolved-scale projection, the same first-order rate is then shown
to hold for the resolved component $u_h=\kappa_h(u_{hb})$. Third, a
sharper estimate for the nonlinear artificial dissipation is derived,
showing that its square-root measure also decays with first order for fixed
problem data. Consequently, the combined energy-error and
artificial-dissipation measure is also $O(h)$, sharpening the previously
available $O(h^{1/2})$ bound.

The theoretical results are complemented by numerical experiments for
both advection--diffusion and advection--diffusion--reaction problems. A smooth manufactured solution, considered both with and without reaction, confirms the predicted first-order energy-norm convergence while the nonlinear artificial-diffusion operator remains active throughout the refinement sequence. A strongly
advection-dominated problem with sharp outflow layers is also considered
to illustrate the stabilizing effect of the nonlinear diffusion
contribution in a regime with large local P\'eclet numbers. In both tests, we employ the DD formulation without under-relaxation or any other convergence-acceleration mechanism.

The remainder of the paper is organized as follows.
Section~2 introduces the model problem and recalls the DD formulation.
Section~3 establishes the local Lipschitz continuity of the nonlinear
artificial diffusivity. Section~4 presents the 
uniqueness analysis. Section~5 establishes the optimal a priori energy-norm estimate
and the decay of the nonlinear artificial dissipation. Section~6 reports
the numerical experiments, and the final section summarizes the main
conclusions and outlines directions for further investigation.

\section{Model problem and Dynamic Diffusion formulation}
\label{sec:model}

Let $\Omega\subset\mathbb{R}^{d}$, $d\in\{2,3\}$, be a
bounded polygonal domain if
$d=2$, or a bounded polyhedral domain if $d=3$, with Lipschitz
boundary. We consider the steady
advection--diffusion--reaction problem
\begin{equation}
-\varepsilon\Delta u
+\boldsymbol{\beta}\cdot\boldsymbol{\nabla}u
+\sigma u
=
f
\qquad\text{in }\Omega,
\label{eq:model}
\end{equation}
subject to the homogeneous Dirichlet boundary condition
\begin{equation*}
u=0
\qquad\text{on }\partial\Omega,
\end{equation*}
where $\varepsilon>0$ is the diffusion coefficient,
$\boldsymbol{\beta}\in[W^{1,\infty}(\Omega)]^d$ is a divergence-free
velocity field, $\sigma\ge0$ is the
constant reaction coefficient, and
$f\in L^2(\Omega)$ is the prescribed source term.

Let
\[
V:=H_0^1(\Omega),
\]
and define
\begin{equation*}
B(w,v)
:=
\varepsilon
(\boldsymbol{\nabla}w,\boldsymbol{\nabla}v)
+
(\boldsymbol{\beta}\cdot\boldsymbol{\nabla}w,v)
+
\sigma(w,v).
\end{equation*}
Since $\boldsymbol{\nabla}\cdot\boldsymbol{\beta}=0$ and
$v=0$ on $\partial\Omega$, the advective contribution is
skew-symmetric on the diagonal, and hence
\begin{equation*}
B(v,v)
=
\varepsilon |v|_1^2
+
\sigma\|v\|_0^2.
\end{equation*}
Accordingly, we introduce the energy norm
\begin{equation*}
\|v\|_E^2
:=
\varepsilon |v|_1^2
+
\sigma\|v\|_0^2.
\end{equation*}

We use $|\cdot|_1$ to denote the usual $H^1$-seminorm. Recall that,
by the Poincaré inequality, $|\cdot|_1$ defines a norm on
$H_0^1(\Omega)$ equivalent to the full $H^1$-norm.

\subsection{Discrete spaces and scale decomposition}

Let $\mathcal T_h$ be a conforming, shape-regular triangulation of
$\Omega$ into simplices $T$, with
$h_T:=\operatorname{diam}(T)$
and $h:=\max_{T\in\mathcal T_h}h_T$.
We consider the continuous piecewise linear finite element space
\[
V_h^0
:=
\left\{
v_h\in H_0^1(\Omega):
v_h|_T\in\mathbb P_1(T),
\quad T\in\mathcal T_h
\right\},
\]
together with an elementwise bubble space

\[
V_b:=\bigoplus_{T\in\mathcal T_h}V_b(T),
\qquad
V_b(T):=\operatorname{span}\{\psi_T\},
\]
where $\psi_T\in H_0^1(T)$ is the standard simplex bubble of degree
$d+1$, extended by zero outside $T$. The enriched discrete
space is defined by
\begin{equation*}
V_{hb}^0
=
V_h^0\oplus V_b.
\end{equation*}
Thus, every $v_{hb}\in V_{hb}^0$ admits the unique decomposition
\begin{equation*}
v_{hb}=v_h+v_b,
\qquad
v_h\in V_h^0,
\quad
v_b\in V_b.
\end{equation*}

We use the projection introduced in \cite{Guermond:1999}, with the notation adopted in \cite{Santos-etal:2021}. Thus, let
\[
\kappa_h:
V_{hb}^0\longrightarrow V_h^0
\]
denote the projection onto the resolved-scale space, so that
\[
\kappa_h(v_{hb})=v_h,
\qquad
(I-\kappa_h)(v_{hb})=v_b.
\]
For the $P_1$-bubble decomposition considered here, this scale
decomposition is $L^2$-stable~\cite{Guermond:1999}. In particular, there exists a constant
$C_\kappa >0$, independent of $h$, such that
\begin{equation}
\|\kappa_h(v_{hb})\|_{0,T}
\le
C_\kappa\|v_{hb}\|_{0,T},
\qquad
T\in\mathcal T_h.
\label{eq:kappa_stability}
\end{equation}

\subsection{The Dynamic Diffusion formulation}

For $v_h\in V_h^0$, the elementwise strong residual is defined by
\[
 R_T(v_h)
:=
-\varepsilon\Delta v_h
+\boldsymbol{\beta}\cdot\boldsymbol{\nabla}v_h
+\sigma v_h-f.
\]
Since $v_h|_T\in\mathbb P_1(T)$, one has $\Delta v_h=0$ in $T$, and therefore
\begin{equation}\label{residual-local}
R_T(v_h)
:=
\boldsymbol{\beta}\cdot\boldsymbol{\nabla}v_h
+\sigma v_h-f
\qquad\text{in }T.
\end{equation}

The local Péclet number is defined by
\begin{equation*}
Pe_T
=
\frac{
\|\boldsymbol{\beta}\|_{0,\infty,T}h_T
}{
2\varepsilon
},
\end{equation*}
where
$\|\cdot\|_{0,\infty,T}$ denotes the
$W^{0,\infty}(T)=L^\infty(T)$ norm. The choice of this norm  provides the natural local velocity scale and makes $Pe_T$ dimensionless, consistently with its interpretation as the ratio between advective and diffusive
effects.

We introduce the indicator function based on the local Péclet criterion
\begin{equation*}
\chi_T
=
\begin{cases}
1, & Pe_T>1,\\[1mm]
0, & Pe_T\le1.
\end{cases}
\end{equation*}
For $\sigma>0$, the local nonlinear artificial diffusivity is defined
for $v_h\in V_h^0$ by
\begin{equation*}
	\xi_T(v_h)
	=
	\chi_T\,
	\mu(h_T)
	\frac{
		\|R_T(v_h)\|_{0,T}
	}{
		\|v_h\|_{1,T}+\tau
	},
	\qquad
	\tau>0,
\end{equation*}
where $\mu(h_T)>0$ is the local DD length scale and satisfies
\begin{equation*}
	\mu(h_T)
	\le
	C_\mu h_T.
\end{equation*}
When $\sigma=0$, the corresponding definition is
\begin{equation*}
	\xi_T(v_h)
	=
	\chi_T\,
	\mu(h_T)
	\frac{
		\|R_T(v_h)\|_{0,T}
	}{
		|v_h|_{1,T}+\tau
	}.
\end{equation*}
In both cases, the regularization parameter $\tau>0$ prevents the
denominator from vanishing. The local Péclet criterion activates the
artificial diffusivity when $Pe_T>1$ and sets it to zero when
$Pe_T\le1$.

For $z_{hb},u_{hb},v_{hb}\in V_{hb}^0$, let
\[
z_h:=\kappa_h(z_{hb})\in V_h^0.
\]
The local nonlinear artificial-diffusion operator is then defined by
\begin{align*}
	D_T(z_{hb};u_{hb},v_{hb})
	&:=
	\xi_T(z_h)
	\left(
	\boldsymbol{\nabla}u_{hb},
	\boldsymbol{\nabla}v_{hb}
	\right)_T
	\\
	&=
	\xi_T\bigl(\kappa_h(z_{hb})\bigr)
	\left(
	\boldsymbol{\nabla}u_{hb},
	\boldsymbol{\nabla}v_{hb}
	\right)_T.
\end{align*}
Thus, $\xi_T$ is defined on the resolved-scale space $V_h^0$, whereas
the dependence of $D_T$ on its first argument $z_{hb}$ is determined
by the composition $\xi_T\circ\kappa_h$. The corresponding global
operator is defined by
\begin{equation}
	D_h(z_{hb};u_{hb},v_{hb})
	:=
	\sum_{T\in\mathcal T_h}
	D_T(z_{hb};u_{hb},v_{hb}).
	\label{eq:Dh}
\end{equation}

The Dynamic Diffusion approximation is then defined as follows:
find $u_{hb}\in V_{hb}^0$ such that
\begin{equation}
B(u_{hb},v_{hb})
+
D_h(u_{hb};u_{hb},v_{hb})
=
(f,v_{hb}),
\qquad
\forall v_{hb}\in V_{hb}^0.
\label{eq:DD}
\end{equation}

The local Péclet criterion determines whether the additional nonlinear artificial-diffusion contribution is present on a given element. If \(Pe_T\le1\), then \(\xi_T=0\) on \(T\), and the local contribution of \eqref{eq:DD} coincides with that of the Galerkin formulation in the enriched \(P_1\)-bubble space. This local deactivation does not define a different method; the resulting enriched Galerkin contribution corresponds to the zero-artificial-diffusion regime of the DD formulation.

\subsection*{Active and inactive element sets}\label{sec:active-inactive}

For later use in the analysis, we introduce the active and inactive
element sets
\begin{equation*}
	\mathcal T_h^{\mathrm{act}}
	:=
	\left\{
	T\in\mathcal T_h:Pe_T>1
	\right\},
	\qquad
	\mathcal T_h^{\mathrm{inact}}
	:=
	\mathcal T_h\setminus\mathcal T_h^{\mathrm{act}}.
\end{equation*}
Since $\xi_T(v_h)=0$ for every
$T\in\mathcal T_h^{\mathrm{inact}}$, the nonlinear
artificial-diffusion operator defined in \eqref{eq:Dh} can equivalently be written as
\begin{equation}
	D_h(z_{hb};u_{hb},v_{hb})
	=
	\sum_{T\in\mathcal T_h^{\mathrm{act}}}
	D_T(z_{hb};u_{hb},v_{hb}).
	\label{eq:Dh_active}
\end{equation}

\subsection{Basic properties}

The analysis in \cite{Santos-etal:2021} provides the basic properties
of the nonlinear coefficient and of the corresponding discrete
problem. In particular, the artificial diffusivity is nonnegative and
satisfies the elementwise estimate
\begin{equation}
0
\le
\xi_T(v_h)
\le
\mu(h_T)q_T,
\label{eq:xi_bound}
\end{equation}
where
\begin{equation}
q_T
:=
\begin{cases}
\text{}
\|\boldsymbol{\beta}\|_{0,\infty,T}
+\sigma
+\tau^{-1}\|f\|_{0,T},
& \sigma>0,
\\[1mm]
\text{}
\|\boldsymbol{\beta}\|_{0,\infty,T}
+\tau^{-1}\|f\|_{0,T},
& \sigma=0.
\end{cases}
\label{eq:qT}
\end{equation}
Consequently,
\begin{equation*}
	0
	\le
	D_T(z_{hb};u_{hb},u_{hb})
	\le
	\mu(h_T)q_T |u_{hb}|_{1,T}^2.
\end{equation*}
Summing over all elements, the nonlinear artificial-diffusion operator
satisfies
\begin{equation}
	0
	\le
	D_h(z_{hb};u_{hb},u_{hb})
	\le
	\sum_{T\in\mathcal T_h}
	\mu(h_T)q_T|u_{hb}|_{1,T}^2.
	\label{eq:Dh_nonnegative}
\end{equation}

The existence of at least one discrete solution to \eqref{eq:DD} was established in \cite{Santos-etal:2021} and will not be reconsidered here. Moreover, the following lemma establishes mesh-independent stability estimates.
\begin{lemma}[Energy and $H^1$-seminorm stability]
\label{lem:energy_stability}
Every solution $u_{hb}\in V_{hb}^0$ of \eqref{eq:DD} satisfies
\begin{equation}
\|u_{hb}\|_E^2
+
D_h(u_{hb};u_{hb},u_{hb})
\le
C_{\mathrm{st}}\|f\|_0^2,
\label{eq:stability}
\end{equation}
where
\begin{equation*}
C_{\mathrm{st}}
=
\begin{cases}
\displaystyle \frac{1}{\sigma},
& \sigma>0,\\[2mm]
\displaystyle \frac{C_P^2}{\varepsilon},
& \sigma=0,
\end{cases}
\end{equation*}
and $C_P$ denotes the Poincaré constant. In addition, every discrete
solution satisfies
\begin{equation}
|u_{hb}|_1
\le
\frac{\|f\|_0}{2\sqrt{\sigma\varepsilon}},
\qquad \sigma>0,
\label{eq:solution_H1_bound_sigma}
\end{equation}
whereas
\begin{equation}
|u_{hb}|_1
\le
\frac{C_P}{\varepsilon}\|f\|_0,
\qquad \sigma=0.
\label{eq:solution_H1_bound_sigma_zero}
\end{equation}
\end{lemma}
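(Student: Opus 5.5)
The plan is to test the discrete problem \eqref{eq:DD} with $v_{hb}=u_{hb}$. The skew-symmetry identity $B(v,v)=\|v\|_E^2$ then gives
\[
\|u_{hb}\|_E^2+D_h(u_{hb};u_{hb},u_{hb})=(f,u_{hb}),
\]
and the nonnegativity \eqref{eq:Dh_nonnegative} keeps both left-hand terms nonnegative. Everything reduces to bounding $(f,u_{hb})$ by something that can be absorbed into $\|u_{hb}\|_E^2$. We then treat the two reaction regimes separately.

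\emph{Case $\sigma>0$.} By Cauchy--Schwarz and Young,
\[
(f,u_{hb})\le \|f\|_0\|u_{hb}\|_0\le \frac{1}{2\sigma}\|f\|_0^2+\frac{\sigma}{2}\|u_{hb}\|_0^2.
\]
Absorbing $\frac{\sigma}{2}\|u_{hb}\|_0^2\le\frac12\|u_{hb}\|_E^2$ and multiplying by two gives
\[
\|u_{hb}\|_E^2+2D_h(u_{hb};u_{hb},u_{hb})\le \sigma^{-1}\|f\|_0^2 .
\]
Since $D_h\ge0$, this implies \eqref{eq:stability} with $C_{\mathrm{st}}=1/\sigma$.

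\emph{Case $\sigma=0$.} Here we use Poincaré, $\|u_{hb}\|_0\le C_P|u_{hb}|_1$, valid since $u_{hb}\in H_0^1(\Omega)$. Then
\[
(f,u_{hb})\le C_P\|f\|_0|u_{hb}|_1\le \frac{C_P^2}{2\varepsilon}\|f\|_0^2+\frac{\varepsilon}{2}|u_{hb}|_1^2,
\]
and absorbing as before gives $C_{\mathrm{st}}=C_P^2/\varepsilon$. Both estimates are mesh-independent: neither $\xi_T$ nor the projection $\kappa_h$ enters, beyond the sign of $D_h$.

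For the $H^1$ bounds, the goal is a product estimate rather than Young's inequality. When $\sigma>0$, dropping $D_h$ gives
\[
\varepsilon|u_{hb}|_1^2+\sigma\|u_{hb}\|_0^2\le \|f\|_0\|u_{hb}\|_0 .
\]
By AM--GM the left side is at least $2\sqrt{\sigma\varepsilon}\,|u_{hb}|_1\|u_{hb}\|_0$. If $\|u_{hb}\|_0>0$, dividing by it yields $2\sqrt{\sigma\varepsilon}\,|u_{hb}|_1\le\|f\|_0$, which is \eqref{eq:solution_H1_bound_sigma}. If $\|u_{hb}\|_0=0$, then $u_{hb}=0$ and the bound is trivial. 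When $\sigma=0$, we have $\varepsilon|u_{hb}|_1^2\le C_P\|f\|_0|u_{hb}|_1$; dividing by $|u_{hb}|_1$, or noting the case $|u_{hb}|_1=0$ separately, gives \eqref{eq:solution_H1_bound_sigma_zero}.

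There is no real obstacle here. The only points needing care are the zero-division cases and the choice of the AM--GM pairing, which produces the constant $1/(2\sqrt{\sigma\varepsilon})$ rather than a cruder bound.
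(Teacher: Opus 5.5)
Your proof is correct and follows essentially the same route as the paper: test \eqref{eq:DD} with $u_{hb}$, use $D_h\ge0$, and bound $(f,u_{hb})$ via Cauchy--Schwarz, Poincar\'e, and Young. The differences are cosmetic. For $\sigma>0$ you absorb with Young's inequality instead of first deriving $\|u_{hb}\|_0\le\sigma^{-1}\|f\|_0$, which even gives $2D_h$ on the left. For the $H^1$ bound you use AM--GM and divide by $\|u_{hb}\|_0$ rather than the paper's $ax-cx^2\le a^2/(4c)$, obtaining the same constant $1/(2\sqrt{\sigma\varepsilon})$.
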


\begin{proof}
Taking $v_{hb}=u_{hb}$ in \eqref{eq:DD} gives
\begin{equation}
	\|u_{hb}\|_E^2
	+
	D_h(u_{hb};u_{hb},u_{hb})
	=
	(f,u_{hb}).
	\label{eq:discrete_energy_identity}
\end{equation}
Assume first that $\sigma>0$. Using the nonnegativity property  \eqref{eq:Dh_nonnegative} and the
Cauchy--Schwarz inequality give
\[
\sigma\|u_{hb}\|_0^2
\le
(f,u_{hb})
\le
\|f\|_0\|u_{hb}\|_0.
\]
Consequently,
\[
\|u_{hb}\|_0
\le
\frac{1}{\sigma}\|f\|_0,
\]
where the conclusion is immediate if $\|u_{hb}\|_0=0$. Returning to the
energy identity, we obtain
\[
\|u_{hb}\|_E^2
+
D_h(u_{hb};u_{hb},u_{hb})
\le
\|f\|_0\|u_{hb}\|_0
\le
\frac{1}{\sigma}\|f\|_0^2.
\]
This proves \eqref{eq:stability} for $\sigma>0$.

The same energy identity also gives
\[
\varepsilon |u_{hb}|_1^2
\le
\|f\|_0\|u_{hb}\|_0
-
\sigma\|u_{hb}\|_0^2.
\]
Using
\[
ax-cx^2
\le
\frac{a^2}{4c},
\qquad c>0,
\]
with $a=\|f\|_0$, $c=\sigma$, and
$x=\|u_{hb}\|_0$, we obtain
\[
\varepsilon |u_{hb}|_1^2
\le
\frac{\|f\|_0^2}{4\sigma},
\]
which proves \eqref{eq:solution_H1_bound_sigma}.

Now let $\sigma=0$. By the Cauchy--Schwarz and Poincaré inequalities,
\[
(f,u_{hb})
\le
C_P\|f\|_0|u_{hb}|_1
=
C_P\varepsilon^{-1/2}\|f\|_0\|u_{hb}\|_E.
\]
Since $\|u_{hb}\|_E=\varepsilon^{1/2}|u_{hb}|_1$ in this case, the
energy identity \eqref{eq:discrete_energy_identity} and \eqref{eq:Dh_nonnegative} also yield
\[
\varepsilon |u_{hb}|_1^2
\le
C_P\|f\|_0|u_{hb}|_1,
\]
which proves \eqref{eq:solution_H1_bound_sigma_zero}.

Young's inequality then yields
\[
(f,u_{hb})
\le
\frac12\|u_{hb}\|_E^2
+
\frac{C_P^2}{2\varepsilon}\|f\|_0^2.
\]
Hence,
\[
\|u_{hb}\|_E^2
+
2D_h(u_{hb};u_{hb},u_{hb})
\le
\frac{C_P^2}{\varepsilon}\|f\|_0^2.
\]
Since $D_h(u_{hb};u_{hb},u_{hb})\ge0$, estimate
\eqref{eq:stability} follows also for $\sigma=0$.
\end{proof}

For $\sigma>0$, the stability estimate in the energy norm is uniform with
respect to both mesh refinement and $\varepsilon$. This does not imply a
uniform bound for the $H^1$ seminorm, since the gradient contribution to the
energy norm is weighted by $\varepsilon$. For $\sigma=0$, the estimate remains
uniform with respect to the mesh size, while its explicit dependence on
$\varepsilon$ arises from the coercivity of the underlying Galerkin bilinear
form. In both cases, the nonnegative of the nonlinear artificial difusion operator  strengthens the
left-hand side of the estimate.

These properties provide the starting point for the analysis developed
below. In particular, the next section establishes a local Lipschitz
estimate for the nonlinear artificial diffusivity, which is subsequently
used to derive a  uniqueness result
 for the DD formulation
considered here.

\begin{remark}[Resolved and bubble components]
	\label{rem:resolved_bubble_roles}
	
	The resolved and bubble components associated with the decomposition
	$V_{hb}^0=V_h^0\oplus V_b$ play different computational roles.
	The degrees of freedom associated with $u_h$ are globally coupled,
	whereas the bubble degrees of freedom are internal to each element and
	are eliminated locally by static condensation. Consequently, the
	assembled global algebraic problem involves only the degrees of freedom
	of $u_h\in V_h^0$, while the bubble component $u_b$ may be recovered
	elementwise after the global problem has been solved.
	
	The theoretical analysis is first carried out for the full DD
	approximation $u_{hb}$. The corresponding convergence result for the
	resolved component $u_h=\kappa_h(u_{hb})$ is subsequently obtained from
	the stability of the projection $\kappa_h$. In the numerical experiments,
	unless otherwise stated, the solution displayed in the plots and used
	to compute the reported approximation errors is $u_h$. The bubble
	component is recovered when quantities involving the full discrete field
	$u_{hb}$ are required.
\end{remark}


\section{Properties of the nonlinear artificial diffusivity}

In this section, we establish an explicit local Lipschitz estimate for
the nonlinear artificial diffusivity $\xi_T$ defining the bounded DD
formulation analyzed in \cite{Santos-etal:2021}. The analysis relies on
stability properties of the resolved-scale projection $\kappa_h$ and on
the local Lipschitz continuity of the residual norm.

Throughout this section,
$w_{hb},u_{hb},v_{hb}\in V_{hb}^0$
denote arbitrary enriched finite element functions and
\[
w_h=\kappa_h(w_{hb}),
\qquad
u_h=\kappa_h(u_{hb}),
\qquad
v_h=\kappa_h(v_{hb}).
\]


\subsection{Elementary estimates}

This subsection collects the auxiliary estimates required for the local
Lipschitz analysis of the artificial diffusivity. The $L^2$-stability
of the resolved-scale projection $\kappa_h$ was recalled in
\eqref{eq:kappa_stability}. Here, we derive stability properties of
$\kappa_h$ in the $H^1$-seminorm, the $H^1$-norm, and the energy norm,
followed by local Lipschitz estimates for the residual norm.

The projection estimates rely on the orthogonality of the resolved and
bubble spaces with respect to the gradient inner product, as shown in
the following lemma.


\begin{lemma}[Local $H^1$-seminorm orthogonality]
\label{lem:orthogonal_decomposition}
Let $v_h\in V_h^0$ and $v_b\in V_b$. Then, for every
$T\in\mathcal T_h$,
\begin{equation*}
\left(
\boldsymbol{\nabla}v_h,
\boldsymbol{\nabla}v_b
\right)_T
=
0.
\end{equation*}
Consequently, for every decomposition
$v_{hb}=v_h+v_b\in V_{hb}^0$,
\begin{equation}
|v_{hb}|_{1,T}^2
=
|v_h|_{1,T}^2
+
|v_b|_{1,T}^2.
\label{eq:orthogonal_decomposition}
\end{equation}
\end{lemma}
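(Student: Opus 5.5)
The plan is to integrate by parts on a single element and use two facts: on $T$ the gradient of $v_h$ is constant, and the bubble vanishes on $\partial T$. Fix $T\in\mathcal T_h$. Since $v_h|_T\in\mathbb P_1(T)$, the vector $\boldsymbol{\nabla}v_h|_T=:\mathbf g_T$ is constant on $T$. The restriction of $v_b$ to $T$ is a multiple $c_T\psi_T$ of the simplex bubble. Bubbles of neighbouring elements vanish on $T$, because they are extended by zero.

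Hence
\[
\left(\boldsymbol{\nabla}v_h,\boldsymbol{\nabla}v_b\right)_T
=
c_T\,\mathbf g_T\cdot\int_T\boldsymbol{\nabla}\psi_T\,dx .
\]
By the divergence theorem,
\[
\int_T\boldsymbol{\nabla}\psi_T\,dx=\int_{\partial T}\psi_T\,\mathbf n\,ds = 0,
\]
since $\psi_T\in H_0^1(T)$ has zero trace on $\partial T$. Equivalently, one can integrate by parts once: $\Delta v_h=0$ in $T$ and $v_b=0$ on $\partial T$, so
\[
(\boldsymbol{\nabla}v_h,\boldsymbol{\nabla}v_b)_T
=
-(\Delta v_h,v_b)_T+\langle \partial_{\mathbf n}v_h,v_b\rangle_{\partial T}
=0 .
\]
The boundary-integral step needs some justification for an $H_0^1(T)$ function. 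I would handle it either by density of $C_c^\infty(T)$ in $H_0^1(T)$ or directly, since $\psi_T$ is a polynomial vanishing on $\partial T$.

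For the identity \eqref{eq:orthogonal_decomposition}, expand
\[
|v_h+v_b|_{1,T}^2=|v_h|_{1,T}^2+2(\boldsymbol{\nabla}v_h,\boldsymbol{\nabla}v_b)_T+|v_b|_{1,T}^2 .
\]
The cross term vanishes by the first part. Because the decomposition $v_{hb}=v_h+v_b$ in $V_{hb}^0$ is unique, it coincides with $v_h=\kappa_h(v_{hb})$ and $v_b=(I-\kappa_h)v_{hb}$.

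There is no real obstacle here. The only care needed is to confirm that the bubble of a neighbouring element contributes nothing on $T$. This holds because its support is contained in the closure of that other element, which meets $T$ in a set of measure zero.
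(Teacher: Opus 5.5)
Your proposal is correct and follows essentially the same route as the paper. You integrate by parts on $T$, use $\Delta v_h=0$ there and $v_b=0$ on $\partial T$ to kill both terms, and then expand $|v_h+v_b|_{1,T}^2$ so that the cross term drops out. The constant-gradient variant with $\int_T\boldsymbol{\nabla}\psi_T\,dx=0$, and your remarks on neighbouring bubbles and on justifying the trace, are only more careful versions of that same argument.
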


\begin{proof}
Since $v_h|_T\in\mathbb P_1(T)$, we have
\[
\Delta v_h=0
\qquad\text{in }T.
\]
Moreover, since $v_b$ is an element bubble function,
\[
v_b=0
\qquad\text{on }\partial T.
\]
Integration by parts therefore gives
\begin{align*}
\left(
\boldsymbol{\nabla}v_h,
\boldsymbol{\nabla}v_b
\right)_T
&=
-
\left(
\Delta v_h,
v_b
\right)_T
+
\int_{\partial T}
v_b\,
\boldsymbol{\nabla}v_h\cdot\mathbf n
\,ds
\\
&=
0.
\end{align*}
Identity~\eqref{eq:orthogonal_decomposition} follows by expanding
\[
|v_h+v_b|_{1,T}^2
=
|v_h|_{1,T}^2
+
2
\left(
\boldsymbol{\nabla}v_h,
\boldsymbol{\nabla}v_b
\right)_T
+
|v_b|_{1,T}^2.
\]
\end{proof}


The $L^2$-stability \eqref{eq:kappa_stability} and
Lemma~\ref{lem:orthogonal_decomposition} imply that $\kappa_h$ is
locally nonexpansive in the $H^1$-seminorm, locally stable in the
$H^1$-norm, and globally stable in the energy norm.

\begin{lemma}[Stability of the resolved-scale projection]
	\label{lem:projection}
	For every $v_{hb}\in V_{hb}^0$ and every
	$T\in\mathcal T_h$,
	\begin{equation}
		|\kappa_h(v_{hb})|_{1,T}
		\le
		|v_{hb}|_{1,T},
		\label{eq:kappa_H1_seminorm}
	\end{equation}
	and
	\begin{equation}
		\|\kappa_h(v_{hb})\|_{1,T}
		\le
		C_{\kappa,1}\|v_{hb}\|_{1,T},
		\label{eq:kappa_H1_norm}
	\end{equation}
	where
	\[
	C_{\kappa,1}:=\max\{C_\kappa,1\}.
	\]
	Moreover,
	\begin{equation}
		\|\kappa_h(v_{hb})\|_E
		\le
		C_{\kappa,E}\|v_{hb}\|_E,
		\qquad
		C_{\kappa,E}:=\max\{1,C_\kappa\}.
		\label{eq:kappa_energy_stability}
	\end{equation}
	In particular, when $\sigma=0$, one may take
	$C_{\kappa,E}=1$.
\end{lemma}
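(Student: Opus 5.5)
The plan is to prove the three estimates in order, each following from the previous one together with Lemma~\ref{lem:orthogonal_decomposition} and the local $L^2$-stability \eqref{eq:kappa_stability}.

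\textbf{Seminorm bound.} Fix $T\in\mathcal T_h$ and write $v_{hb}=v_h+v_b$ with $v_h=\kappa_h(v_{hb})$ and $v_b=(I-\kappa_h)(v_{hb})$. The identity \eqref{eq:orthogonal_decomposition} gives
\[
|v_{hb}|_{1,T}^2=|v_h|_{1,T}^2+|v_b|_{1,T}^2\ge|v_h|_{1,T}^2 .
\]
This is exactly \eqref{eq:kappa_H1_seminorm}. No constant appears, because the gradient orthogonality is exact on each element.

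\textbf{Full $H^1$-norm bound.} I use $\|\cdot\|_{1,T}^2=\|\cdot\|_{0,T}^2+|\cdot|_{1,T}^2$. Then \eqref{eq:kappa_stability} and \eqref{eq:kappa_H1_seminorm} give
\[
\|v_h\|_{1,T}^2\le C_\kappa^2\|v_{hb}\|_{0,T}^2+|v_{hb}|_{1,T}^2\le \max\{C_\kappa,1\}^2\|v_{hb}\|_{1,T}^2 .
\]
Taking square roots yields \eqref{eq:kappa_H1_norm} with $C_{\kappa,1}=\max\{C_\kappa,1\}$.

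\textbf{Energy-norm bound.} Summing the elementwise seminorm bound over $T$ gives $\varepsilon|v_h|_1^2\le\varepsilon|v_{hb}|_1^2$. Squaring the local $L^2$ bound \eqref{eq:kappa_stability} and summing gives $\sigma\|v_h\|_0^2\le C_\kappa^2\sigma\|v_{hb}\|_0^2$. Adding the two inequalities gives
\[
\|v_h\|_E^2\le \max\{1,C_\kappa^2\}\|v_{hb}\|_E^2 ,
\]
which is \eqref{eq:kappa_energy_stability}. When $\sigma=0$ the $L^2$ term is absent, so only the seminorm inequality is used and $C_{\kappa,E}=1$.

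\textbf{Main point to check.} None of these steps is a real obstacle. The only thing to verify is that the splitting $v_{hb}=v_h+v_b$ used in Lemma~\ref{lem:orthogonal_decomposition} is the same as the splitting defined by $\kappa_h$. This holds because the sum $V_h^0\oplus V_b$ is direct and $\kappa_h$ returns the $V_h^0$ component. The orthogonality itself relies on $v_h$ being linear on each element and on the bubbles vanishing on $\partial T$, both of which were already established in that lemma.
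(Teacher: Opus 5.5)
Your proof is correct and follows the paper's argument step by step. The seminorm bound comes from the elementwise gradient orthogonality of Lemma~\ref{lem:orthogonal_decomposition}; the full $H^1$ bound combines it with the local $L^2$-stability \eqref{eq:kappa_stability}; and the energy bound, including the choice $C_{\kappa,E}=1$ when $\sigma=0$, comes from squaring, summing, and weighting these local estimates by $\varepsilon$ and $\sigma$.
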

\begin{proof}
	Writing
	\[
	v_{hb}=v_h+v_b,
	\qquad
	v_h=\kappa_h(v_{hb}),
	\qquad
	v_b=(I-\kappa_h)(v_{hb}),
	\]
	identity~\eqref{eq:orthogonal_decomposition} gives
	\[
	|v_{hb}|_{1,T}^2
	=
	|\kappa_h(v_{hb})|_{1,T}^2
	+
	|v_b|_{1,T}^2.
	\]
	Hence,
	\[
	|\kappa_h(v_{hb})|_{1,T}
	\le
	|v_{hb}|_{1,T},
	\]
	which proves \eqref{eq:kappa_H1_seminorm}.
	
	Combining \eqref{eq:kappa_H1_seminorm} with the $L^2$-stability
	\eqref{eq:kappa_stability}, we obtain
	\begin{align*}
		\|\kappa_h(v_{hb})\|_{1,T}^2
		&=
		\|\kappa_h(v_{hb})\|_{0,T}^2
		+
		|\kappa_h(v_{hb})|_{1,T}^2
		\\
		&\le
		C_\kappa^2\|v_{hb}\|_{0,T}^2
		+
		|v_{hb}|_{1,T}^2
		\\
		&\le
		C_{\kappa,1}^2
		\left(
		\|v_{hb}\|_{0,T}^2
		+
		|v_{hb}|_{1,T}^2
		\right)
		\\
		&=
		C_{\kappa,1}^2\|v_{hb}\|_{1,T}^2,
	\end{align*}
	which proves \eqref{eq:kappa_H1_norm}.
	
	Summing the local estimates
	\eqref{eq:kappa_stability} and
	\eqref{eq:kappa_H1_seminorm} over the mesh gives
	\[
	\|\kappa_h(v_{hb})\|_0
	\le
	C_\kappa\|v_{hb}\|_0,
	\qquad
	|\kappa_h(v_{hb})|_1
	\le
	|v_{hb}|_1.
	\]
	Therefore,
	\begin{align*}
		\|\kappa_h(v_{hb})\|_E^2
		&=
		\varepsilon|\kappa_h(v_{hb})|_1^2
		+
		\sigma\|\kappa_h(v_{hb})\|_0^2
		\\
		&\le
		\varepsilon|v_{hb}|_1^2
		+
		\sigma C_\kappa^2\|v_{hb}\|_0^2
		\\
		&\le
		C_{\kappa,E}^2
		\left(
		\varepsilon|v_{hb}|_1^2
		+
		\sigma\|v_{hb}\|_0^2
		\right)
		\\
		&=
		C_{\kappa,E}^2\|v_{hb}\|_E^2,
	\end{align*}
	where
	\[
	C_{\kappa,E}:=\max\{1,C_\kappa\}.
	\]
	Taking square roots proves
	\eqref{eq:kappa_energy_stability}. If $\sigma=0$, the $L^2$ term is
	absent and \eqref{eq:kappa_H1_seminorm} shows that one may take
	$C_{\kappa,E}=1$.
\end{proof}



\begin{lemma}[Local Lipschitz continuity of the residual norm]
	\label{lem:residual}
	For every $u_h,v_h\in V_h^0$ and every
	$T\in\mathcal T_h$,
	\begin{equation}
		\left|
		\|R_T(u_h)\|_{0,T}
		-
		\|R_T(v_h)\|_{0,T}
		\right|
		\le
		\|R_T(u_h)-R_T(v_h)\|_{0,T}
		\le
		a_T\|u_h-v_h\|_{1,T},
		\label{eq:residual_lipschitz}
	\end{equation}
	where
	\begin{equation}
		a_T
		:=
		\left(
		\|\boldsymbol{\beta}\|_{0,\infty,T}^2+\sigma^2
		\right)^{1/2}.
		\label{eq:aT}
	\end{equation}
\end{lemma}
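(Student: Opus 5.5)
The first inequality in \eqref{eq:residual_lipschitz} is simply the reverse triangle inequality in $L^2(T)$, applied to the functions $R_T(u_h)$ and $R_T(v_h)$. The substance lies in the second inequality. For it, I will use the explicit form \eqref{residual-local} of the residual on a $\mathbb P_1$ element. The source term $f$ enters $R_T(u_h)$ and $R_T(v_h)$ identically, so it cancels. Writing $e:=u_h-v_h\in V_h^0$, we get
\[
R_T(u_h)-R_T(v_h)=\boldsymbol{\beta}\cdot\boldsymbol{\nabla}e+\sigma e
\qquad\text{in }T .
\]
The residual is therefore affine in its argument, and its linear part does not depend on $f$.

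Next I bound this pointwise. By the Cauchy--Schwarz inequality in $\mathbb R^d$ and then in $\mathbb R^2$, at almost every $x\in T$,
\[
|\boldsymbol{\beta}\cdot\boldsymbol{\nabla}e+\sigma e|
\le \|\boldsymbol{\beta}\|_{0,\infty,T}|\boldsymbol{\nabla}e|+\sigma|e|
\le \bigl(\|\boldsymbol{\beta}\|_{0,\infty,T}^2+\sigma^2\bigr)^{1/2}\bigl(|\boldsymbol{\nabla}e|^2+|e|^2\bigr)^{1/2}.
\]
Squaring, integrating over $T$, and taking square roots then gives
\[
\|R_T(u_h)-R_T(v_h)\|_{0,T}\le a_T\bigl(|e|_{1,T}^2+\|e\|_{0,T}^2\bigr)^{1/2}=a_T\|e\|_{1,T},
\]
with $a_T$ as in \eqref{eq:aT}. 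Using the $\mathbb R^2$ Cauchy--Schwarz step, rather than bounding the two terms separately, is what produces exactly the constant $a_T$ instead of $\|\boldsymbol{\beta}\|_{0,\infty,T}+\sigma$.

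There is no genuine obstacle. The only points needing care are that the Laplacian term vanishes because $e|_T\in\mathbb P_1(T)$, as noted in \eqref{residual-local}, and that the cancellation of $f$ requires both residuals to be evaluated with the same data. In the case $\sigma=0$ the estimate still holds with $a_T=\|\boldsymbol{\beta}\|_{0,\infty,T}$. In that case one could even replace $\|e\|_{1,T}$ by $|e|_{1,T}$, but the stated form with the full norm is what I will prove, uniformly for both cases.
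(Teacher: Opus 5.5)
Your proof is correct and follows the paper's argument: you cancel $f$, write the difference as $\boldsymbol{\beta}\cdot\boldsymbol{\nabla}e+\sigma e$, and use Cauchy--Schwarz in $\mathbb R^2$ to obtain exactly the constant $a_T$. The only difference is cosmetic: you apply the $\mathbb R^2$ Cauchy--Schwarz pointwise before integrating, whereas the paper first uses the triangle inequality in $L^2(T)$ to get $\|\boldsymbol{\beta}\|_{0,\infty,T}|e|_{1,T}+\sigma\|e\|_{0,T}$ and then applies Cauchy--Schwarz to these two norms, which gives the same constant.
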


\begin{proof}
	The first inequality in \eqref{eq:residual_lipschitz} follows from the
	reverse triangle inequality. By the residual definition \eqref{residual-local},
	\[
	R_T(u_h)-R_T(v_h)
	=
	\boldsymbol{\beta}\cdot
	\boldsymbol{\nabla}(u_h-v_h)
	+
	\sigma(u_h-v_h).
	\]
Therefore, using the Cauchy--Schwarz inequality in $\mathbb R^2$, we
obtain
\begin{align*}
	\|R_T(u_h)-R_T(v_h)\|_{0,T}
	&\le
	\|\boldsymbol{\beta}\|_{0,\infty,T}
	|u_h-v_h|_{1,T}
	+
	\sigma\|u_h-v_h\|_{0,T}
	\\
	&\le
	\left(
	\|\boldsymbol{\beta}\|_{0,\infty,T}^2+\sigma^2
	\right)^{1/2}
	\left(
	|u_h-v_h|_{1,T}^2
	+
	\|u_h-v_h\|_{0,T}^2
	\right)^{1/2}
	\\
	&=
	a_T\|u_h-v_h\|_{1,T}.
\end{align*}
	This proves \eqref{eq:residual_lipschitz}.
\end{proof}


To establish the Lipschitz continuity of \(\xi_T\), we need to control the variation of the inverse regularized denominator. The following elementary estimate provides this control.

\begin{lemma}[Elementwise Lipschitz continuity of the denominator]
	\label{lem:denominator}
	Define
	\[
	\mathcal N_T(u_h)
	:=
	\begin{cases}
		\|u_h\|_{1,T},
		& \sigma>0,
		\\[1mm]
		|u_h|_{1,T},
		& \sigma=0.
	\end{cases}
	\]
	Then, for every $u_h,v_h\in V_h^0$,
	\begin{align}
		\left|
		\bigl(\mathcal N_T(u_h)+\tau\bigr)
		-
		\bigl(\mathcal N_T(v_h)+\tau\bigr)
		\right|
		&=
		|\mathcal N_T(u_h)-\mathcal N_T(v_h)|
		\nonumber\\
		&\le
		\mathcal N_T(u_h-v_h)
		\le
		\|u_h-v_h\|_{1,T}.
		\label{eq:denominator_lipschitz}
	\end{align}
	When $\sigma=0$, the intermediate bound gives the sharper estimate
	\[
	|\mathcal N_T(u_h)-\mathcal N_T(v_h)|
	\le
	|u_h-v_h|_{1,T}.
	\]
\end{lemma}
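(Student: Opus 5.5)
The plan is to treat this as a reverse triangle inequality for a seminorm, followed by a comparison of $\mathcal N_T$ with the full $H^1(T)$-norm.

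First, the additive constant $\tau$ cancels in the difference, which gives the equality in \eqref{eq:denominator_lipschitz} immediately. Next, observe that in both cases $\mathcal N_T$ is a seminorm on $V_h^0$ restricted to $T$. For $\sigma>0$, $\mathcal N_T=\|\cdot\|_{1,T}$ is the norm induced by the inner product $(\cdot,\cdot)_{0,T}+(\boldsymbol{\nabla}\cdot,\boldsymbol{\nabla}\cdot)_T$. For $\sigma=0$, $\mathcal N_T=|\cdot|_{1,T}$ is the seminorm induced by $(\boldsymbol{\nabla}\cdot,\boldsymbol{\nabla}\cdot)_T$. In particular, $\mathcal N_T$ satisfies the triangle inequality $\mathcal N_T(a+b)\le \mathcal N_T(a)+\mathcal N_T(b)$, which follows from the Cauchy--Schwarz inequality for the corresponding (semi-)inner product. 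It is also absolutely homogeneous, so $\mathcal N_T(-a)=\mathcal N_T(a)$.

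The first inequality is then the standard reverse triangle inequality. Writing $u_h=(u_h-v_h)+v_h$ gives
\[
\mathcal N_T(u_h)-\mathcal N_T(v_h)\le \mathcal N_T(u_h-v_h).
\]
Exchanging the roles of $u_h$ and $v_h$ and using $\mathcal N_T(v_h-u_h)=\mathcal N_T(u_h-v_h)$ bounds the negative part as well. Together these yield $|\mathcal N_T(u_h)-\mathcal N_T(v_h)|\le\mathcal N_T(u_h-v_h)$.

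For the final inequality, $\mathcal N_T(w)\le\|w\|_{1,T}$ with $w=u_h-v_h$, consider the two cases separately. When $\sigma>0$ it holds with equality. When $\sigma=0$ it follows from
\[
|w|_{1,T}^2\le |w|_{1,T}^2+\|w\|_{0,T}^2=\|w\|_{1,T}^2.
\]
The sharper estimate claimed for $\sigma=0$ is simply the intermediate bound $\mathcal N_T(u_h-v_h)=|u_h-v_h|_{1,T}$ read off before this last step.

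No step poses a real obstacle. The only point needing care is that $|\cdot|_{1,T}$ is merely a seminorm, since it vanishes on constants. The reverse triangle inequality uses only subadditivity and symmetry, however, so definiteness is never required and the argument goes through unchanged.
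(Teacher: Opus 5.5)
Your proposal is correct and matches the paper's proof, which simply invokes the reverse triangle inequality for the $H^1(T)$-norm when $\sigma>0$ and for the $H^1(T)$-seminorm when $\sigma=0$. You additionally spell out details the paper leaves implicit: the cancellation of $\tau$, the comparison $|w|_{1,T}\le\|w\|_{1,T}$, and the fact that definiteness of the seminorm is never needed.
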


\begin{proof}
	The result follows directly from the reverse triangle inequality for
	the $H^1$ norm when $\sigma>0$ and for the $H^1$ seminorm when
	$\sigma=0$.
\end{proof}

\subsection{Local Lipschitz continuity}

The estimates underlying the Lipschitz continuity of the artificial
diffusivity appeared within the continuity argument used in the
existence proof of \cite{Santos-etal:2021}. However, this property was
not stated as a separate local result or employed in a uniqueness
analysis.

We first establish the Lipschitz continuity of $\xi_T$ on the
resolved-scale space $V_h^0$. Since the first argument of the nonlinear
artificial-diffusion operator enters through the composition
$\xi_T\circ\kappa_h$, the corresponding result on the enriched space
$V_{hb}^0$ then follows from the stability properties of $\kappa_h$
established in Lemma~\ref{lem:projection}.

\begin{theorem}[Local Lipschitz continuity of the artificial diffusivity]
	\label{thm:lipschitz}
	For every $u_h,v_h\in V_h^0$ and every $T\in\mathcal T_h$,
	\begin{equation}
		\left|
		\xi_T(u_h)-\xi_T(v_h)
		\right|
		\le
		\widehat L_T
		\|u_h-v_h\|_{1,T},
		\label{eq:xi_lipschitz_resolved}
	\end{equation}
	where
	\begin{equation}
		\widehat L_T
		:=
		\chi_T
		\frac{\mu(h_T)}{\tau}
		\left(
		a_T+q_T
		\right).
		\label{eq:LT_resolved}
	\end{equation}
	Consequently, for every $u_{hb},v_{hb}\in V_{hb}^0$,
	\begin{equation}
		\left|
		\xi_T\bigl(\kappa_h(u_{hb})\bigr)
		-
		\xi_T\bigl(\kappa_h(v_{hb})\bigr)
		\right|
		\le
		L_T
		\|u_{hb}-v_{hb}\|_{1,T},
		\label{eq:xi_lipschitz}
	\end{equation}
	where
	\begin{equation}
		L_T
		:=
		C_{\kappa,1}\widehat L_T
		=
		\chi_T
		\frac{\mu(h_T)}{\tau}
		C_{\kappa,1}
		\left(
		a_T+q_T
		\right).
		\label{eq:LT}
	\end{equation}
	Here, $a_T$ and $q_T$ are defined in
	\eqref{eq:aT} and \eqref{eq:qT}, respectively, and
	$C_{\kappa,1}$ is the constant appearing in
	Lemma~\ref{lem:projection}.
	
	Moreover, for fixed problem data and fixed $\tau>0$, there exists a
	constant $C_L>0$, independent of $h$, such that
	\begin{equation}
		L_T\le C_Lh_T,
		\qquad
		T\in\mathcal T_h.
		\label{eq:LT_order}
	\end{equation}
	When $\sigma=0$, estimates
	\eqref{eq:xi_lipschitz_resolved} and \eqref{eq:xi_lipschitz}
	can be sharpened to
	\begin{equation}
		\left|
		\xi_T(u_h)-\xi_T(v_h)
		\right|
		\le
		L_T^0|u_h-v_h|_{1,T},
		\label{eq:xi_lipschitz_resolved_sigma_zero}
	\end{equation}
	and
	\begin{equation}
		\left|
		\xi_T\bigl(\kappa_h(u_{hb})\bigr)
		-
		\xi_T\bigl(\kappa_h(v_{hb})\bigr)
		\right|
		\le
		L_T^0|u_{hb}-v_{hb}|_{1,T},
		\label{eq:xi_lipschitz_sigma_zero}
	\end{equation}
	respectively, where
	\begin{equation}
		L_T^0
		:=
		\chi_T
		\frac{\mu(h_T)}{\tau}
		\left(
		\|\boldsymbol{\beta}\|_{0,\infty,T}
		+
		q_T
		\right).
		\label{eq:LT_sigma_zero}
	\end{equation}
	There also exists a constant $C_L^0>0$, independent of $h$, such that
	\begin{equation}
		L_T^0\le C_L^0h_T,
		\qquad
		T\in\mathcal T_h.
		\label{eq:LT_sigma_zero_order}
	\end{equation}
\end{theorem}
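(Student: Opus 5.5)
The plan is the standard ``difference of quotients'' argument. If $\chi_T=0$ both sides vanish, so assume $\chi_T=1$. Write $N_u:=\mathcal N_T(u_h)+\tau$ and $N_v:=\mathcal N_T(v_h)+\tau$, both $\ge\tau$, and split
\[
\frac{\|R_T(u_h)\|_{0,T}}{N_u}-\frac{\|R_T(v_h)\|_{0,T}}{N_v}
=
\frac{\|R_T(u_h)\|_{0,T}-\|R_T(v_h)\|_{0,T}}{N_u}
+
\|R_T(v_h)\|_{0,T}\,\frac{N_v-N_u}{N_uN_v}.
\]
For the first term, Lemma~\ref{lem:residual} and $N_u\ge\tau$ give the bound $\tau^{-1}a_T\|u_h-v_h\|_{1,T}$.

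For the second term, use $N_u\ge\tau$ to write $1/(N_uN_v)\le\tau^{-1}N_v^{-1}$. The key point is that $\|R_T(v_h)\|_{0,T}/N_v\le q_T$. This is exactly the content behind \eqref{eq:xi_bound}, namely $\xi_T(v_h)\le\mu(h_T)q_T$ with $\chi_T=1$. It can be checked directly:
\[
\|R_T(v_h)\|_{0,T}\le \|\boldsymbol\beta\|_{0,\infty,T}|v_h|_{1,T}+\sigma\|v_h\|_{0,T}+\|f\|_{0,T}.
\]
Each of the first two terms, divided by $N_v$, is at most its coefficient, and $\|f\|_{0,T}/N_v\le\tau^{-1}\|f\|_{0,T}$. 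Combining this with Lemma~\ref{lem:denominator}, $|N_u-N_v|\le\|u_h-v_h\|_{1,T}$, bounds the second term by $\tau^{-1}q_T\|u_h-v_h\|_{1,T}$. Multiplying by $\mu(h_T)$ yields \eqref{eq:xi_lipschitz_resolved} with $\widehat L_T$ as in \eqref{eq:LT_resolved}.

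The enriched estimate \eqref{eq:xi_lipschitz} follows by applying \eqref{eq:xi_lipschitz_resolved} to $\kappa_h(u_{hb})$ and $\kappa_h(v_{hb})$. Linearity of $\kappa_h$ gives $\kappa_h(u_{hb})-\kappa_h(v_{hb})=\kappa_h(u_{hb}-v_{hb})$, and \eqref{eq:kappa_H1_norm} then contributes the factor $C_{\kappa,1}$.

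For $\sigma=0$, repeat the argument using the seminorm versions. Since $R_T(u_h)-R_T(v_h)=\boldsymbol\beta\cdot\boldsymbol\nabla(u_h-v_h)$, the residual difference is bounded by $\|\boldsymbol\beta\|_{0,\infty,T}|u_h-v_h|_{1,T}$. Lemma~\ref{lem:denominator} gives the sharper seminorm bound, and \eqref{eq:kappa_H1_seminorm} transfers the estimate to $V_{hb}^0$ with constant $1$.

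For the orders \eqref{eq:LT_order} and \eqref{eq:LT_sigma_zero_order}, use $\mu(h_T)\le C_\mu h_T$ together with uniform bounds
\[
a_T\le(\|\boldsymbol\beta\|_{0,\infty}^2+\sigma^2)^{1/2},\qquad q_T\le\|\boldsymbol\beta\|_{0,\infty}+\sigma+\tau^{-1}\|f\|_0.
\]
These give $C_L=C_\mu C_{\kappa,1}\tau^{-1}\bigl(\|\boldsymbol\beta\|_{0,\infty}^2+\sigma^2)^{1/2}+\|\boldsymbol\beta\|_{0,\infty}+\sigma+\tau^{-1}\|f\|_0\bigr)$, and analogously for $C_L^0$.

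The only delicate step is the bound $\|R_T(v_h)\|_{0,T}/N_v\le q_T$. For $\sigma>0$ it must be arranged so that $\|\boldsymbol\beta\|_{0,\infty,T}|v_h|_{1,T}+\sigma\|v_h\|_{0,T}\le(\|\boldsymbol\beta\|_{0,\infty,T}+\sigma)\|v_h\|_{1,T}$. For $\sigma=0$ it must be checked that the seminorm denominator alone suffices, which it does because the $L^2$ term is absent. Everything else is routine.
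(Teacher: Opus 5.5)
Your proposal is correct and follows essentially the same argument as the paper: the same quotient splitting with both denominators bounded below by $\tau$, Lemmas~\ref{lem:residual} and~\ref{lem:denominator}, the bound $\|R_T(v_h)\|_{0,T}/(\mathcal N_T(v_h)+\tau)\le q_T$, and the transfer to $V_{hb}^0$ via linearity of $\kappa_h$ together with \eqref{eq:kappa_H1_norm} or \eqref{eq:kappa_H1_seminorm}, which yields the same constants $C_L$ and $C_L^0$. The only minor difference is that you verify the bound by $q_T$ directly from the residual, whereas the paper reads it off from the previously established estimate \eqref{eq:xi_bound}.
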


\begin{proof}
	Let $u_h,v_h\in V_h^0$. If
	$T\in\mathcal T_h^{\mathrm{inact}}$, then $\chi_T=0$, and hence
	\[
	\xi_T(u_h)=\xi_T(v_h)=0.
	\]
	Thus, all the stated estimates hold immediately on inactive elements.
	
	Let $T\in\mathcal T_h^{\mathrm{act}}$. By
	Lemma~\ref{lem:residual},
	\[
	\left|
	\|R_T(u_h)\|_{0,T}
	-
	\|R_T(v_h)\|_{0,T}
	\right|
	\le
	a_T\|u_h-v_h\|_{1,T}.
	\]
	Moreover, \eqref{eq:denominator_lipschitz} gives
	\[
	\left|
	\mathcal N_T(u_h)-\mathcal N_T(v_h)
	\right|
	\le
	\|u_h-v_h\|_{1,T}.
	\]
	Furthermore, the definition of $\xi_T$ and the bound
	\eqref{eq:xi_bound} give
	\begin{equation}
		\frac{\|R_T(v_h)\|_{0,T}}
		{\mathcal N_T(v_h)+\tau}
		\le
		q_T.
		\label{eq:R_div_N}
	\end{equation}
	
	Using the identity
	\[
	\frac{a}{b}-\frac{c}{d}
	=
	\frac{a-c}{b}
	+
	\frac{c(d-b)}{bd},
	\]
	we obtain
	\begin{align*}
		\left|
		\xi_T(u_h)-\xi_T(v_h)
		\right|
		&\le
		\mu(h_T)
		\frac{
			\left|
			\|R_T(u_h)\|_{0,T}
			-
			\|R_T(v_h)\|_{0,T}
			\right|
		}
		{\mathcal N_T(u_h)+\tau}
		\\
		&\quad
		+
		\mu(h_T)
		\frac{
			\|R_T(v_h)\|_{0,T}
			\left|
			\mathcal N_T(u_h)-\mathcal N_T(v_h)
			\right|
		}
		{
			\bigl(\mathcal N_T(u_h)+\tau\bigr)
			\bigl(\mathcal N_T(v_h)+\tau\bigr)
		}.
	\end{align*}
	Since
	\[
	\mathcal N_T(u_h)+\tau\ge\tau,
	\]
	the first term satisfies
	\[
	\mu(h_T)
	\frac{
		\left|
		\|R_T(u_h)\|_{0,T}
		-
		\|R_T(v_h)\|_{0,T}
		\right|
	}
	{\mathcal N_T(u_h)+\tau}
	\le
	\frac{\mu(h_T)}{\tau}
	a_T\|u_h-v_h\|_{1,T}.
	\]
	For the second term, using \eqref{eq:R_div_N} and again
	$\mathcal N_T(u_h)+\tau\ge\tau$, we obtain
	\[
	\mu(h_T)
	\frac{
		\|R_T(v_h)\|_{0,T}
		\left|
		\mathcal N_T(u_h)-\mathcal N_T(v_h)
		\right|
	}
	{
		\bigl(\mathcal N_T(u_h)+\tau\bigr)
		\bigl(\mathcal N_T(v_h)+\tau\bigr)
	}
	\le
	\frac{\mu(h_T)}{\tau}
	q_T\|u_h-v_h\|_{1,T}.
	\]
	Combining these two estimates yields
	\[
	\left|
	\xi_T(u_h)-\xi_T(v_h)
	\right|
	\le
	\frac{\mu(h_T)}{\tau}
	\left(
	a_T+q_T
	\right)
	\|u_h-v_h\|_{1,T},
	\]
	which proves \eqref{eq:xi_lipschitz_resolved}, with
	$\widehat L_T$ defined in \eqref{eq:LT_resolved}.
	
	Now let $u_{hb},v_{hb}\in V_{hb}^0$ and set
	\[
	u_h=\kappa_h(u_{hb}),
	\qquad
	v_h=\kappa_h(v_{hb}).
	\]
	Applying \eqref{eq:xi_lipschitz_resolved}, the linearity of
	$\kappa_h$, and its local $H^1$-stability gives
	\begin{align*}
		&\left|
		\xi_T\bigl(\kappa_h(u_{hb})\bigr)
		-
		\xi_T\bigl(\kappa_h(v_{hb})\bigr)
		\right|
		\\
		&\qquad\le
		\widehat L_T
		\|\kappa_h(u_{hb})-\kappa_h(v_{hb})\|_{1,T}
		\\
		&\qquad=
		\widehat L_T
		\|\kappa_h(u_{hb}-v_{hb})\|_{1,T}
		\\
		&\qquad\le
		C_{\kappa,1}\widehat L_T
		\|u_{hb}-v_{hb}\|_{1,T}.
	\end{align*}
	This proves \eqref{eq:xi_lipschitz}. In view of
	\eqref{eq:LT_resolved}, the corresponding Lipschitz constant is
	$L_T=C_{\kappa,1}\widehat L_T$, as stated in \eqref{eq:LT}.

	Since
	\[
	\mu(h_T)\le C_\mu h_T,
	\qquad
	\|f\|_{0,T}\le\|f\|_0,
	\]
	we may take
	\[
	C_L
	:=
	\frac{C_\mu C_{\kappa,1}}{\tau}
	\left[
	\left(
	\|\boldsymbol{\beta}\|_{0,\infty}^2+\sigma^2
	\right)^{1/2}
	+
	\|\boldsymbol{\beta}\|_{0,\infty}
	+
	\sigma
	+
	\tau^{-1}\|f\|_0
	\right].
	\]
	It follows that
	\[
	L_T\le C_Lh_T,
	\]
	which proves \eqref{eq:LT_order}.
	
	Finally, suppose that $\sigma=0$. In this case,
	Lemmas~\ref{lem:residual} and~\ref{lem:denominator} yield the sharper
	estimates
	\[
	\left|
	\|R_T(u_h)\|_{0,T}
	-
	\|R_T(v_h)\|_{0,T}
	\right|
	\le
	\|\boldsymbol{\beta}\|_{0,\infty,T}
	|u_h-v_h|_{1,T}
	\]
	and
	\[
	\left|
	\mathcal N_T(u_h)-\mathcal N_T(v_h)
	\right|
	\le
	|u_h-v_h|_{1,T}.
	\]
	Repeating the preceding quotient estimate gives
	\[
	\left|
	\xi_T(u_h)-\xi_T(v_h)
	\right|
	\le
	\frac{\mu(h_T)}{\tau}
	\left(
	\|\boldsymbol{\beta}\|_{0,\infty,T}
	+
	q_T
	\right)
	|u_h-v_h|_{1,T},
	\]
	which proves \eqref{eq:xi_lipschitz_resolved_sigma_zero}.
	
	For $u_{hb},v_{hb}\in V_{hb}^0$, the local nonexpansiveness of
	$\kappa_h$ in the $H^1$-seminorm gives
	\[
	|\kappa_h(u_{hb}-v_{hb})|_{1,T}
	\le
	|u_{hb}-v_{hb}|_{1,T}.
	\]
	Therefore, the same constant \(L_T^0\) applies to the composition
	$\xi_T\circ\kappa_h$, proving
	\eqref{eq:xi_lipschitz_sigma_zero}.
	
	Finally, using $\mu(h_T)\le C_\mu h_T$ and
	$\|f\|_{0,T}\le\|f\|_0$, we may take
	\[
	C_L^0
	:=
	\frac{C_\mu}{\tau}
	\left(
	2\|\boldsymbol{\beta}\|_{0,\infty}
	+
	\tau^{-1}\|f\|_0
	\right),
	\]
	which proves \eqref{eq:LT_sigma_zero_order}.
\end{proof}

\begin{remark}
	When $\sigma=0$, the passage from the resolved-scale space $V_h^0$
	to the enriched space $V_{hb}^0$ does not increase the local Lipschitz
	constant. Indeed, the projection $\kappa_h$ is locally nonexpansive in
	the $H^1$-seminorm, and therefore
	\[
	|\kappa_h(u_{hb}-v_{hb})|_{1,T}
	\le
	|u_{hb}-v_{hb}|_{1,T}.
	\]
	Consequently, the same constant $L_T^0$ controls both $\xi_T$ on
	$V_h^0$ and the composition $\xi_T\circ\kappa_h$ on $V_{hb}^0$.
	In contrast, for $\sigma>0$, the estimate is expressed in the full
	$H^1$-norm and the transfer to the enriched space introduces the
	projection-stability factor $C_{\kappa,1}$.
\end{remark}

Theorem~\ref{thm:lipschitz} first establishes the local Lipschitz
continuity of the artificial diffusivity on the resolved-scale space.
The stability of the resolved-scale projection then transfers this
property to the enriched space, with a local Lipschitz constant of
order $h_T$. This estimate is the main ingredient in the uniqueness
analysis of the next section.


\section{Uniqueness of the Dynamic Diffusion method}
\label{sec:uniqueness}

Existence of at least one discrete solution of \eqref{eq:DD} was
established in \cite{Santos-etal:2021}. We now prove uniqueness by
combining the local Lipschitz estimates of
Theorem~\ref{thm:lipschitz} with the coercivity and stability
properties of the DD formulation.

A recent analysis of the DD method obtained existence and uniqueness
through a contraction-mapping argument \cite{Du-etal:2026}. The
argument developed here proceeds directly by comparing two discrete
solutions. Subtraction of the corresponding discrete equations,
followed by control of the nonlinear artificial-diffusion term, yields
mesh-dependent sufficient conditions for uniqueness.

If $f=0$, testing \eqref{eq:DD} with the discrete solution and using
the coercivity of $B$ and the nonnegativity of $D_h$ show that the
solution is identically zero. We therefore consider $f\neq0$ in what
follows.

\begin{theorem}[Uniqueness for $\sigma>0$]
	\label{thm:uniqueness_sigma}
	Assume $\sigma>0$, and let
	$u_{hb},v_{hb}\in V_{hb}^0$
	be two solutions of \eqref{eq:DD}. Define
	\begin{equation*}
		L_h
		:=
		\max_{T\in\mathcal T_h}L_T,
	\end{equation*}
	where $L_T$ is the local Lipschitz constant defined in
	\eqref{eq:LT}. Let $C_0>0$ be such that
	\begin{equation}
		\|w\|_1
		\le
		C_0|w|_1,
		\qquad
		\forall w\in H_0^1(\Omega),
		\label{eq:poincare_full}
	\end{equation}
	as follows from the Poincaré inequality. If
	\begin{equation}
		\frac{C_0L_h\|f\|_0}
		{2\sqrt{\sigma}\,\varepsilon^{3/2}}
		<1,
		\label{eq:uniqueness_condition_sigma}
	\end{equation}
	then
	\[
	u_{hb}=v_{hb}.
	\]
	Hence, the discrete solution of \eqref{eq:DD} is unique.
\end{theorem}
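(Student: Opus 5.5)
Set $e_{hb}:=u_{hb}-v_{hb}$. The plan is to subtract the two discrete equations, test with $e_{hb}$, and absorb the nonlinear perturbation into the coercive part using the Lipschitz bound of Theorem~\ref{thm:lipschitz}. Subtracting \eqref{eq:DD} for $v_{hb}$ from \eqref{eq:DD} for $u_{hb}$ and testing with $e_{hb}$ gives
\begin{equation*}
\|e_{hb}\|_E^2
+
D_h(u_{hb};u_{hb},e_{hb})-D_h(v_{hb};v_{hb},e_{hb})
=0 .
\end{equation*}
On each element we split the difference of the diffusion terms as
\begin{equation*}
\xi_T(\kappa_h u_{hb})(\boldsymbol{\nabla}e_{hb},\boldsymbol{\nabla}e_{hb})_T
+
\bigl[\xi_T(\kappa_h u_{hb})-\xi_T(\kappa_h v_{hb})\bigr]
(\boldsymbol{\nabla}v_{hb},\boldsymbol{\nabla}e_{hb})_T .
\end{equation*}
The first term is nonnegative by \eqref{eq:xi_bound}, so it can be dropped. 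This leaves
\begin{equation*}
\|e_{hb}\|_E^2
\le
\sum_{T}\bigl|\xi_T(\kappa_h u_{hb})-\xi_T(\kappa_h v_{hb})\bigr|\,
|v_{hb}|_{1,T}\,|e_{hb}|_{1,T}.
\end{equation*}

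Next we apply \eqref{eq:xi_lipschitz}, bound $L_T\le L_h$, and use $|e_{hb}|_{1,T}\le\|e_{hb}\|_{1,T}$. The sum is then at most $L_h\sum_T\|e_{hb}\|_{1,T}^2|v_{hb}|_{1,T}$. This step is the main obstacle. We must pull out the solution factor uniformly, so we need a bound on $\max_T|v_{hb}|_{1,T}$. The natural choice is the trivial local bound $|v_{hb}|_{1,T}\le|v_{hb}|_1$, combined with the global estimate \eqref{eq:solution_H1_bound_sigma}, which gives $|v_{hb}|_{1,T}\le\|f\|_0/(2\sqrt{\sigma\varepsilon})$. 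This yields
\begin{equation*}
\|e_{hb}\|_E^2
\le
\frac{L_h\|f\|_0}{2\sqrt{\sigma\varepsilon}}\,\|e_{hb}\|_1^2
\le
\frac{C_0^2L_h\|f\|_0}{2\sqrt{\sigma\varepsilon}}\,|e_{hb}|_1^2 ,
\end{equation*}
where the last inequality uses \eqref{eq:poincare_full}.

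To close the argument we bound the left side from below by $\varepsilon|e_{hb}|_1^2$, which gives
\begin{equation*}
\Bigl(1-\frac{C_0^2L_h\|f\|_0}{2\sqrt{\sigma}\,\varepsilon^{3/2}}\Bigr)|e_{hb}|_1^2\le0 .
\end{equation*}
This reproduces the stated condition up to the power of $C_0$.

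To get exactly $C_0$ rather than $C_0^2$, I would try a Cauchy--Schwarz pairing that keeps the two $e_{hb}$ factors in different norms. One factor stays as $|e_{hb}|_{1,T}$ and only the Lipschitz factor $\|e_{hb}\|_{1,T}$ is converted. Summing with Cauchy--Schwarz then gives $\sum_T\|e_{hb}\|_{1,T}|e_{hb}|_{1,T}\le\|e_{hb}\|_1|e_{hb}|_1\le C_0|e_{hb}|_1^2$, which produces precisely \eqref{eq:uniqueness_condition_sigma}. Under that condition the prefactor is positive, so $|e_{hb}|_1=0$, and hence $e_{hb}=0$ since $|\cdot|_1$ is a norm on $H_0^1(\Omega)$.
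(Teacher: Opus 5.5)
Your proof is correct and follows essentially the same route as the paper's. You use the same add-and-subtract splitting, drop the nonnegative term $\xi_T(\kappa_h(u_{hb}))|e_{hb}|_{1,T}^2$, and then apply the Lipschitz bound \eqref{eq:xi_lipschitz} with $L_T\le L_h$, the stability bound \eqref{eq:solution_H1_bound_sigma} for $v_{hb}$, and the Poincaré inequality \eqref{eq:poincare_full} once. Your refined pairing (pulling out $\max_T|v_{hb}|_{1,T}\le|v_{hb}|_1$ and then applying Cauchy--Schwarz to $\sum_T\|e_{hb}\|_{1,T}|e_{hb}|_{1,T}$) gives the same bound $|v_{hb}|_1\|e_{hb}\|_1|e_{hb}|_1$ that the paper reaches through a slightly different Cauchy--Schwarz grouping, and hence exactly \eqref{eq:uniqueness_condition_sigma}. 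The first $C_0^2$ pass can be dropped, since on its own it would prove only a weaker statement because $C_0\ge1$.
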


\begin{proof}
	
	Let
	\[
	e:=u_{hb}-v_{hb}\in V_{hb}^0.
	\]
	Subtracting the discrete equations satisfied by $u_{hb}$ and
	$v_{hb}$ and choosing $e$ as test function gives
	\begin{equation}
		B(e,e)
		+
		D_h(u_{hb};u_{hb},e)
		-
		D_h(v_{hb};v_{hb},e)
		=
		0.
		\label{eq:uniqueness_identity}
	\end{equation}
	Adding and subtracting $D_h(u_{hb};v_{hb},e)$, we obtain
	\begin{align}
		&D_h(u_{hb};u_{hb},e)
		-
		D_h(v_{hb};v_{hb},e)
		=
		D_h(u_{hb};e,e)
		+
		D_h(u_{hb};v_{hb},e)
		-
		D_h(v_{hb};v_{hb},e).
		\label{eq:uniqueness_decomposition}
	\end{align}
	Since
	\[
	D_h(u_{hb};e,e)\ge0
	\]
	and
	\[
	B(e,e)
	=
	\varepsilon|e|_1^2+\sigma\|e\|_0^2
	\ge
	\varepsilon|e|_1^2,
	\]
	it follows from
	\eqref{eq:uniqueness_identity}--\eqref{eq:uniqueness_decomposition}
	that
	\begin{equation}
		\varepsilon|e|_1^2
		\le
		\left|
		D_h(u_{hb};v_{hb},e)
		-
		D_h(v_{hb};v_{hb},e)
		\right|.
		\label{eq:uniqueness_coercive_bound}
	\end{equation}
	Using the active-element representation \eqref{eq:Dh_active}, the
	right-hand side satisfies
	\begin{align}
		&\left|
		D_h(u_{hb};v_{hb},e)
		-
		D_h(v_{hb};v_{hb},e)
		\right|
		\le
		\sum_{T\in\mathcal T_h^{\mathrm{act}}}
		\left|
		\xi_T\bigl(\kappa_h(u_{hb})\bigr)
		-
		\xi_T\bigl(\kappa_h(v_{hb})\bigr)
		\right|
		|v_{hb}|_{1,T}|e|_{1,T}.
		\label{eq:nonlinear_difference_active}
	\end{align}	
	Applying \eqref{eq:xi_lipschitz} and the definition of $L_h$, we obtain
	\begin{equation}
		\varepsilon|e|_1^2
		\le
		L_h
		\sum_{T\in\mathcal T_h^{\mathrm{act}}}
		\|e\|_{1,T}|v_{hb}|_{1,T}|e|_{1,T}.
		\label{eq:uniqueness_lipschitz_bound}
	\end{equation}
	By the Cauchy--Schwarz inequality,
	\begin{align*}
		&\sum_{T\in\mathcal T_h^{\mathrm{act}}}
		\|e\|_{1,T}|v_{hb}|_{1,T}|e|_{1,T}
		\le
		\left(
		\sum_{T\in\mathcal T_h^{\mathrm{act}}}
		|v_{hb}|_{1,T}^2
		\right)^{1/2}
		\left(
		\sum_{T\in\mathcal T_h^{\mathrm{act}}}
		\|e\|_{1,T}^2|e|_{1,T}^2
		\right)^{1/2}.
	\end{align*}
	The first factor is bounded by $|v_{hb}|_1$. For the second factor,
	the nonnegativity of the local contributions gives
	\begin{align*}
		\sum_{T\in\mathcal T_h^{\mathrm{act}}}
		\|e\|_{1,T}^2|e|_{1,T}^2
		&\le
		\left(
		\sum_{T\in\mathcal T_h^{\mathrm{act}}}
		\|e\|_{1,T}^2
		\right)
		\left(
		\sum_{T\in\mathcal T_h^{\mathrm{act}}}
		|e|_{1,T}^2
		\right)
		\\
		&\le
		\|e\|_1^2|e|_1^2.
	\end{align*}
	Therefore,
	\begin{equation}
		\sum_{T\in\mathcal T_h^{\mathrm{act}}}
		\|e\|_{1,T}|v_{hb}|_{1,T}|e|_{1,T}
		\le
		|v_{hb}|_1\|e\|_1|e|_1.
		\label{eq:uniqueness_sum_bound}
	\end{equation}
	Combining \eqref{eq:uniqueness_lipschitz_bound},
	\eqref{eq:uniqueness_sum_bound}, and \eqref{eq:poincare_full}, we find
	\begin{equation}
		\varepsilon|e|_1^2
		\le
		C_0L_h|v_{hb}|_1|e|_1^2.
		\label{eq:uniqueness_bound_3}
	\end{equation}
Substituting the stability estimate
\eqref{eq:solution_H1_bound_sigma} from
Lemma~\ref{lem:energy_stability} into
\eqref{eq:uniqueness_bound_3} yields
\[
\varepsilon|e|_1^2
\le
\frac{C_0L_h\|f\|_0}
{2\sqrt{\sigma\varepsilon}}
|e|_1^2.
\]
	Dividing both sides by $\varepsilon>0$ and collecting the terms gives
	\begin{equation}
		\left(
		1-
		\frac{C_0L_h\|f\|_0}
		{2\sqrt{\sigma}\,\varepsilon^{3/2}}
		\right)
		|e|_1^2
		\le0.
		\label{eq:uniqueness_final_sigma}
	\end{equation}
By \eqref{eq:uniqueness_condition_sigma}, the factor in parentheses in
\eqref{eq:uniqueness_final_sigma} is
strictly positive. Since $|e|_1^2\ge0$, it follows directly that
$|e|_1=0$. Moreover, $e\in H_0^1(\Omega)$, so that $e=0$ and,
consequently,
\[
u_{hb}=v_{hb}.
\]
\end{proof}

When $\sigma=0$, the sharper seminorm-based Lipschitz estimate
\eqref{eq:xi_lipschitz_sigma_zero}, together with the stability bound
\eqref{eq:solution_H1_bound_sigma_zero}, yields the corresponding
uniqueness criterion. The algebraic part of the argument is the same
as in the case $\sigma>0$; only the estimates specific to the
reaction-free problem are detailed below.

\begin{theorem}[Uniqueness for $\sigma=0$]
	\label{thm:uniqueness_sigma_zero}
	Assume $\sigma=0$, and let
	$u_{hb},v_{hb}\in V_{hb}^0$
	be two solutions of \eqref{eq:DD}. Define
	\[
	L_h^0
	:=
	\max_{T\in\mathcal T_h}L_T^0,
	\]
where $L_T^0$ is defined in \eqref{eq:LT_sigma_zero} and $C_P$ is the
Poincaré constant introduced in Lemma~\ref{lem:energy_stability}.
	If
	\begin{equation}
		\frac{C_PL_h^0\|f\|_0}{\varepsilon^2}
		<1,
		\label{eq:uniqueness_condition_sigma_zero}
	\end{equation}
	then
	\[
	u_{hb}=v_{hb}.
	\]
	Hence, the discrete solution of \eqref{eq:DD} is unique.
\end{theorem}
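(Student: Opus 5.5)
We follow the proof of Theorem~\ref{thm:uniqueness_sigma} step by step, replacing the full-norm Lipschitz bound with the seminorm bound \eqref{eq:xi_lipschitz_sigma_zero}. This removes the Poincaré constant $C_0$. Set $e:=u_{hb}-v_{hb}\in V_{hb}^0$. Subtracting the two discrete equations \eqref{eq:DD} and testing with $e$ gives identity \eqref{eq:uniqueness_identity}. For $\sigma=0$ we have $B(e,e)=\varepsilon|e|_1^2$. We add and subtract $D_h(u_{hb};v_{hb},e)$ as in \eqref{eq:uniqueness_decomposition} and drop the nonnegative term $D_h(u_{hb};e,e)$. This yields
\[
\varepsilon|e|_1^2\le\bigl|D_h(u_{hb};v_{hb},e)-D_h(v_{hb};v_{hb},e)\bigr|.
\]

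Next we restrict the sum to active elements using \eqref{eq:Dh_active} and apply the Cauchy--Schwarz inequality on each element. With \eqref{eq:xi_lipschitz_sigma_zero} and $L_h^0=\max_T L_T^0$ we obtain
\[
\varepsilon|e|_1^2\le L_h^0\sum_{T\in\mathcal T_h^{\mathrm{act}}}|e|_{1,T}^2\,|v_{hb}|_{1,T}.
\]
The elementwise sum is bounded as in \eqref{eq:uniqueness_sum_bound}. The discrete Cauchy--Schwarz inequality gives the factor $\bigl(\sum_T|v_{hb}|_{1,T}^2\bigr)^{1/2}\le|v_{hb}|_1$. The remaining factor satisfies $\bigl(\sum_T|e|_{1,T}^4\bigr)^{1/2}\le\sum_T|e|_{1,T}^2\le|e|_1^2$. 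Together these give $\varepsilon|e|_1^2\le L_h^0|v_{hb}|_1|e|_1^2$.

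We then apply the reaction-free stability bound \eqref{eq:solution_H1_bound_sigma_zero} of Lemma~\ref{lem:energy_stability} to the solution $v_{hb}$, namely $|v_{hb}|_1\le C_P\varepsilon^{-1}\|f\|_0$. Dividing by $\varepsilon$ leads to
\[
\Bigl(1-\frac{C_PL_h^0\|f\|_0}{\varepsilon^2}\Bigr)|e|_1^2\le0.
\]
Condition \eqref{eq:uniqueness_condition_sigma_zero} makes the bracket positive, so $|e|_1=0$. Since $e\in H_0^1(\Omega)$ and $|\cdot|_1$ is a norm there, $e=0$.

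The one point needing care is that the Lipschitz estimate must be applied to the composition $\xi_T\circ\kappa_h$ in the seminorm. This is exactly what \eqref{eq:xi_lipschitz_sigma_zero} supplies, using the nonexpansiveness of $\kappa_h$. No full $H^1$-norm appears, so no Poincaré-type constant enters beyond the $C_P$ from the stability bound. The remaining steps are routine.
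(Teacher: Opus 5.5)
Your proposal is correct and follows the paper's proof essentially line by line: the same subtraction and add-and-subtract decomposition, the seminorm Lipschitz bound \eqref{eq:xi_lipschitz_sigma_zero} on active elements, the Cauchy--Schwarz step yielding $L_h^0|v_{hb}|_1|e|_1^2$, and the reaction-free stability bound \eqref{eq:solution_H1_bound_sigma_zero}. Your explicit justification that $\bigl(\sum_T|e|_{1,T}^4\bigr)^{1/2}\le\sum_T|e|_{1,T}^2\le|e|_1^2$ fills in a step the paper leaves implicit.
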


\begin{proof}
	Let
	\[
	e:=u_{hb}-v_{hb}\in V_{hb}^0.
	\]
	Repeating the algebraic argument in
	\eqref{eq:uniqueness_identity}--\eqref{eq:uniqueness_decomposition},
	and observing that
	\[
	B(e,e)=\varepsilon|e|_1^2
	\]
	we recover estimate
	\eqref{eq:uniqueness_coercive_bound}.
	Using \eqref{eq:nonlinear_difference_active} and the sharper Lipschitz
	estimate \eqref{eq:xi_lipschitz_sigma_zero}, we find
	\begin{align*}
		\varepsilon|e|_1^2
		&\le
		L_h^0
		\sum_{T\in\mathcal T_h^{\mathrm{act}}}
		|e|_{1,T}|v_{hb}|_{1,T}|e|_{1,T}
		\\
		&=
		L_h^0
		\sum_{T\in\mathcal T_h^{\mathrm{act}}}
		|v_{hb}|_{1,T}|e|_{1,T}^2.
	\end{align*}
	By the Cauchy--Schwarz inequality,
	\begin{align*}
		\sum_{T\in\mathcal T_h^{\mathrm{act}}}
		|v_{hb}|_{1,T}|e|_{1,T}^2
		&\le
		\left(
		\sum_{T\in\mathcal T_h^{\mathrm{act}}}
		|v_{hb}|_{1,T}^2
		\right)^{1/2}
		\left(
		\sum_{T\in\mathcal T_h^{\mathrm{act}}}
		|e|_{1,T}^4
		\right)^{1/2}
		\\
		&\le
		|v_{hb}|_1|e|_1^2.
	\end{align*}
	Therefore,
	\begin{equation}
		\varepsilon|e|_1^2
		\le
		L_h^0|v_{hb}|_1|e|_1^2.
		\label{eq:uniqueness_bound_sigma_zero}
	\end{equation}
	Substituting the stability estimate
	\eqref{eq:solution_H1_bound_sigma_zero} from
	Lemma~\ref{lem:energy_stability} into
	\eqref{eq:uniqueness_bound_sigma_zero} yields
	\[
	\varepsilon|e|_1^2
	\le
	\frac{C_PL_h^0\|f\|_0}{\varepsilon}
	|e|_1^2.
	\]
	Dividing by $\varepsilon>0$ and collecting the terms gives
	\[
	\left(
	1-
	\frac{C_PL_h^0\|f\|_0}{\varepsilon^2}
	\right)
	|e|_1^2
	\le0.
	\]
	By \eqref{eq:uniqueness_condition_sigma_zero}, the factor in
	parentheses is strictly positive. Since $|e|_1^2\ge0$, it follows
	directly that $|e|_1=0$. Moreover, $e\in H_0^1(\Omega)$, so that
	$e=0$ and, consequently,
	\[
	u_{hb}=v_{hb}.
	\]
\end{proof}

\begin{remark}
	Since
	\[
	L_h\le C_Lh
	\qquad\text{and}\qquad
	L_h^0\le C_L^0h,
	\]
	the sufficient conditions in
	Theorems~\ref{thm:uniqueness_sigma} and
	\ref{thm:uniqueness_sigma_zero} are satisfied for sufficiently small
	$h$. Therefore, for each fixed $\varepsilon>0$, the DD discrete
	solution is unique on sufficiently fine meshes. The mesh-resolution
	threshold obtained from these conditions depends on $\varepsilon$
	because the proofs rely on diffusion-weighted coercivity estimates.
	Accordingly, the resulting uniqueness criteria are not uniform as
	$\varepsilon\to0$.
\end{remark}

\begin{corollary}[Uniqueness of the resolved approximation]
	\label{cor:uniqueness_resolved}
	Under the assumptions of either
	Theorem~\ref{thm:uniqueness_sigma} or
	Theorem~\ref{thm:uniqueness_sigma_zero}, the resolved component
	\[
	u_h=\kappa_h(u_{hb})\in V_h^0
	\]
	is uniquely determined.
\end{corollary}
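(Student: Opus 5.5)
The corollary follows almost immediately from the uniqueness theorems, because $\kappa_h$ is a well-defined single-valued map on $V_{hb}^0$. The plan is as follows.

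First, fix the data and a mesh satisfying the hypotheses of Theorem~\ref{thm:uniqueness_sigma} when $\sigma>0$, or of Theorem~\ref{thm:uniqueness_sigma_zero} when $\sigma=0$. Existence of at least one solution $u_{hb}$ of \eqref{eq:DD} is taken from \cite{Santos-etal:2021}. The relevant theorem then shows that any two solutions coincide, so \eqref{eq:DD} has exactly one solution $u_{hb}\in V_{hb}^0$.

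Second, use the uniqueness of the direct-sum decomposition $V_{hb}^0=V_h^0\oplus V_b$. Each $v_{hb}$ has exactly one representation $v_h+v_b$, so $\kappa_h(v_{hb})=v_h$ is a single-valued linear map. Hence $u_h=\kappa_h(u_{hb})$ is determined by $u_{hb}$. Explicitly, if $u_{hb}$ and $v_{hb}$ are any two solutions, then
\[
\kappa_h(u_{hb})-\kappa_h(v_{hb})=\kappa_h(u_{hb}-v_{hb})=\kappa_h(0)=0.
\]
Alternatively, one can quantify this with Lemma~\ref{lem:projection}: $|\kappa_h(u_{hb}-v_{hb})|_1\le|u_{hb}-v_{hb}|_1=0$, and the Poincaré inequality then gives the conclusion on $H_0^1(\Omega)$.

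There is no real obstacle here. The only point that needs care is the interpretation of the statement. The resolved component $u_h$ is not characterized by a separate equation posed on $V_h^0$; it is defined as the image of the unique enriched solution. So uniqueness of $u_h$ is inherited through the well-definedness and linearity of $\kappa_h$. The bubble component $u_b=(I-\kappa_h)(u_{hb})$ is unique by the same argument.
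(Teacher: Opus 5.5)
Your proposal is correct and follows the paper's argument. Theorems~\ref{thm:uniqueness_sigma} and~\ref{thm:uniqueness_sigma_zero} give uniqueness of $u_{hb}$, and since $\kappa_h$ is a single-valued linear map, $u_h=\kappa_h(u_{hb})$ is determined at once; the quantitative route via Lemma~\ref{lem:projection} and the Poincaré inequality is valid but not needed.
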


\begin{proof}
	The result follows immediately from the uniqueness of $u_{hb}$ and
	the identity $u_h=\kappa_h(u_{hb})$.
\end{proof}

With the solvability properties of the discrete problem established,
we next examine the accuracy of the DD approximation. In particular,
we prove its optimal-order convergence in the energy norm.

\section{Convergence analysis}
\label{sec:error_analysis}

We now establish optimal first-order convergence of the DD approximation
in the energy norm. We first analyze the enriched approximation
$u_{hb}$ and subsequently derive the corresponding result for the
resolved finite element solution
$u_h=\kappa_h(u_{hb})$.

The key point is to estimate the energy error separately from the
nonlinear artificial-dissipation measure. In contrast with the combined
estimate obtained in \cite{Santos-etal:2021}, the argument developed
below treats the nonlinear artificial-diffusion contribution directly
as a bounded perturbation in the error equation.

The analysis does not require uniqueness. All the estimates below hold
for any discrete solution of \eqref{eq:DD} satisfying the stability
bounds of Lemma~\ref{lem:energy_stability}.

\subsection{Optimal convergence in the energy norm}
\label{sec:optimal_energy}

Let $\Pi_hu\in V_h^0$ denote the finite element interpolant of $u$.
For any discrete solution $u_{hb}\in V_{hb}^0$, we decompose the error
as
\begin{equation}
	u-u_{hb}
	=
	\eta+\varphi,
	\qquad
	\eta:=u-\Pi_hu,
	\qquad
	\varphi:=\Pi_hu-u_{hb}.
	\label{eq:error_decomposition}
\end{equation}
For $u\in H^2(\Omega)$, the standard interpolation estimates give
\begin{equation}
	\|\eta\|_0
	\le
	C_Ih^2|u|_2,
	\qquad
	|\eta|_1
	\le
	C_Ih|u|_2.
	\label{eq:interp}
\end{equation}

We further define
\begin{equation}
	\delta_h
	:=
	\max_{T\in\mathcal T_h}
	\mu(h_T)q_T.
	\label{eq:delta_h}
\end{equation}
Since $\mu(h_T)\le C_\mu h_T$ and
$\|f\|_{0,T}\le\|f\|_0$, the constant
\[
C_\delta
:=
C_\mu
\left(
\|\boldsymbol{\beta}\|_{0,\infty}
+
\sigma
+
\tau^{-1}\|f\|_0
\right)
\]
is independent of $h$ and satisfies
\begin{equation}
	\delta_h\le C_\delta h.
	\label{eq:delta_order}
\end{equation}

We next derive two relations that will be used in the convergence
proofs for both $\sigma>0$ and $\sigma=0$. Since $u$ satisfies the continuous problem and $u_{hb}$
satisfies \eqref{eq:DD}, testing the corresponding error equation with
$\varphi\in V_{hb}^0$ gives
\begin{equation}
	\|\varphi\|_E^2
	=
	-B(\eta,\varphi)
	+
	D_h(u_{hb};u_{hb},\varphi).
	\label{eq:phi_identity}
\end{equation}
Moreover, since the nonlinear artificial diffusivity vanishes on
inactive elements, \eqref{eq:xi_bound}, the definition
\eqref{eq:delta_h}, and the Cauchy--Schwarz inequality yield
\begin{align*}
	\left|
	D_h(u_{hb};u_{hb},\varphi)
	\right|
	&\le
	\sum_{T\in\mathcal T_h^{\mathrm{act}}}
	\mu(h_T)q_T
	|u_{hb}|_{1,T}|\varphi|_{1,T}
	\\
	&\le
	\delta_h
	\sum_{T\in\mathcal T_h^{\mathrm{act}}}
	|u_{hb}|_{1,T}|\varphi|_{1,T}
	\\
	&\le
	\delta_h|u_{hb}|_1|\varphi|_1.
\end{align*}
Using
\[
|\varphi|_1
\le
\varepsilon^{-1/2}\|\varphi\|_E,
\]
we obtain the common estimate
\begin{equation}
	\left|
	D_h(u_{hb};u_{hb},\varphi)
	\right|
	\le
	\delta_h\varepsilon^{-1/2}
	|u_{hb}|_1\|\varphi\|_E.
	\label{eq:DD_error_common}
\end{equation}


\begin{theorem}[Optimal energy-norm estimate for $\sigma>0$]
	\label{thm:optimal_sigma}
	Assume $\sigma>0$, and let
	$u\in H^2(\Omega)\cap H_0^1(\Omega)$ be the solution of
	\eqref{eq:model}. Let $u_{hb}\in V_{hb}^0$ be any solution of the DD
	problem \eqref{eq:DD}. Then
	\begin{align}
		\|u-u_{hb}\|_E
		\le
		C\Bigg[
		&
		\left(
		\varepsilon^{1/2}
		+
		\sigma^{-1/2}
		\|\boldsymbol{\beta}\|_{0,\infty}
		\right)h
		+
		\sigma^{1/2}h^2
		\Bigg]|u|_2
		+
		C
		\frac{\delta_h}
		{\sqrt{\sigma}\,\varepsilon}
		\|f\|_0,
		\label{eq:optimal_sigma}
	\end{align}
	where $C>0$ is independent of $h$. Consequently, for fixed
	$\varepsilon$, $\sigma$, $\boldsymbol{\beta}$, $f$, and $\tau$,
	\begin{equation}
		\|u-u_{hb}\|_E
		=
		O(h).
		\label{eq:optimal_rate_sigma}
	\end{equation}
\end{theorem}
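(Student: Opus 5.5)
The plan is to start from the identity \eqref{eq:phi_identity} for $\varphi=\Pi_hu-u_{hb}$. We bound the two right-hand terms separately, absorb $\|\varphi\|_E$, and finish with the triangle inequality
\[
\|u-u_{hb}\|_E\le\|\eta\|_E+\|\varphi\|_E .
\]
The interpolation part is standard. By \eqref{eq:interp},
\[
\|\eta\|_E\le \varepsilon^{1/2}|\eta|_1+\sigma^{1/2}\|\eta\|_0\le C_I\bigl(\varepsilon^{1/2}h+\sigma^{1/2}h^2\bigr)|u|_2 .
\]
This yields the $\varepsilon^{1/2}h$ and $\sigma^{1/2}h^2$ contributions in \eqref{eq:optimal_sigma}.

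For the bilinear term, split
\[
B(\eta,\varphi)=\varepsilon(\nabla\eta,\nabla\varphi)+(\boldsymbol\beta\cdot\nabla\eta,\varphi)+\sigma(\eta,\varphi).
\]
The diffusive and reactive parts are bounded by $\|\eta\|_E\|\varphi\|_E$ through Cauchy--Schwarz in the energy inner product. For the advective part, do not integrate by parts. Instead bound it directly as
\[
|(\boldsymbol\beta\cdot\nabla\eta,\varphi)|\le\|\boldsymbol\beta\|_{0,\infty}|\eta|_1\|\varphi\|_0\le\sigma^{-1/2}\|\boldsymbol\beta\|_{0,\infty}C_Ih|u|_2\,\|\varphi\|_E ,
\]
using $\sigma^{1/2}\|\varphi\|_0\le\|\varphi\|_E$. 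This is exactly the origin of the $\sigma^{-1/2}\|\boldsymbol\beta\|_{0,\infty}h$ term in the statement. Since $|\eta|_1=O(h)$ already, no skew-symmetry trick is needed.

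For the artificial-diffusion term, use the common estimate \eqref{eq:DD_error_common} together with the discrete $H^1$ bound \eqref{eq:solution_H1_bound_sigma} of Lemma~\ref{lem:energy_stability}, $|u_{hb}|_1\le\|f\|_0/(2\sqrt{\sigma\varepsilon})$. This gives
\[
|D_h(u_{hb};u_{hb},\varphi)|\le\frac{\delta_h}{2\sqrt{\sigma}\,\varepsilon}\|f\|_0\|\varphi\|_E .
\]
Inserting both bounds into \eqref{eq:phi_identity} and dividing by $\|\varphi\|_E$ (the case $\varphi=0$ is trivial) gives
\[
\|\varphi\|_E\le\|\eta\|_E+\sigma^{-1/2}\|\boldsymbol\beta\|_{0,\infty}C_Ih|u|_2+\frac{\delta_h}{2\sqrt{\sigma}\varepsilon}\|f\|_0 .
\]
Adding $\|\eta\|_E$ proves \eqref{eq:optimal_sigma}. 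The rate \eqref{eq:optimal_rate_sigma} then follows from \eqref{eq:delta_order}, $\delta_h\le C_\delta h$.

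The step needing most care is the derivation of \eqref{eq:phi_identity} itself. We need Galerkin consistency $B(u,v_{hb})=(f,v_{hb})$ for all $v_{hb}\in V_{hb}^0\subset H_0^1$, which holds because $u\in H^2$ solves \eqref{eq:model}. Subtracting \eqref{eq:DD} gives
\[
B(u-u_{hb},v_{hb})=D_h(u_{hb};u_{hb},v_{hb}).
\]
With $v_{hb}=\varphi$ and $B(\varphi,\varphi)=\|\varphi\|_E^2$, this becomes \eqref{eq:phi_identity}. The crucial point, in contrast with \cite{Santos-etal:2021}, is that the nonlinear term is treated as a consistency perturbation of size $\delta_h|u_{hb}|_1$. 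It is not paired against $D_h(u_{hb};\varphi,\varphi)$. This avoids the square-root loss, and it works only because the stability bound makes $|u_{hb}|_1$ mesh-independent.
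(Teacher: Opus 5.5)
Your proposal is correct and follows the paper's proof essentially step by step. It uses the same identity \eqref{eq:phi_identity} and the same direct bound $\sigma^{-1/2}\|\boldsymbol{\beta}\|_{0,\infty}|\eta|_1\|\varphi\|_E$ for the advective term, without integration by parts. It then combines \eqref{eq:DD_error_common} with \eqref{eq:solution_H1_bound_sigma} and finishes with the same division by $\|\varphi\|_E$ and triangle inequality. The one cosmetic difference is that you bound the diffusive and reactive parts jointly by $\|\eta\|_E\|\varphi\|_E$ rather than term by term, which does not affect the final estimate.
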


\begin{proof}
	We first estimate the interpolation term in
	\eqref{eq:phi_identity}. By the definition of $B$,
	\begin{align}
		|B(\eta,\varphi)|
		\le{}&
		\varepsilon|\eta|_1|\varphi|_1
		+
		\|\boldsymbol{\beta}\|_{0,\infty}
		|\eta|_1\|\varphi\|_0
		+
		\sigma\|\eta\|_0\|\varphi\|_0
		\nonumber\\
		\le{}&
		\left[
		\varepsilon^{1/2}|\eta|_1
		+
		\sigma^{-1/2}
		\|\boldsymbol{\beta}\|_{0,\infty}|\eta|_1
		+
		\sigma^{1/2}\|\eta\|_0
		\right]
		\|\varphi\|_E,
		\label{eq:B_eta_phi_sigma}
	\end{align}
	where we used
	\[
	|\varphi|_1
	\le
	\varepsilon^{-1/2}\|\varphi\|_E,
	\qquad
	\|\varphi\|_0
	\le
	\sigma^{-1/2}\|\varphi\|_E.
	\]
Combining the stability estimate
\eqref{eq:solution_H1_bound_sigma} with
\eqref{eq:DD_error_common}, we obtain
\begin{equation}
	\left|
	D_h(u_{hb};u_{hb},\varphi)
	\right|
	\le
	\frac{\delta_h}
	{2\sqrt{\sigma}\,\varepsilon}
	\|f\|_0\|\varphi\|_E.
	\label{eq:DD_error_bound_sigma}
\end{equation}
	Substituting \eqref{eq:B_eta_phi_sigma} and
	\eqref{eq:DD_error_bound_sigma} into \eqref{eq:phi_identity} yields
	\begin{align*}
		\|\varphi\|_E^2
		\le
		\Bigg[
		&
		\left(
		\varepsilon^{1/2}
		+
		\sigma^{-1/2}
		\|\boldsymbol{\beta}\|_{0,\infty}
		\right)
		|\eta|_1
		+
		\sigma^{1/2}\|\eta\|_0
		+
		\frac{\delta_h}
		{2\sqrt{\sigma}\,\varepsilon}
		\|f\|_0
		\Bigg]
		\|\varphi\|_E.
	\end{align*}
	If $\|\varphi\|_E=0$, the desired estimate follows immediately.
	Otherwise, division by $\|\varphi\|_E$ gives
	\begin{align}
		\|\varphi\|_E
		\le{}&
		\left(
		\varepsilon^{1/2}
		+
		\sigma^{-1/2}
		\|\boldsymbol{\beta}\|_{0,\infty}
		\right)
		|\eta|_1
		+
		\sigma^{1/2}\|\eta\|_0
	+
		\frac{\delta_h}
		{2\sqrt{\sigma}\,\varepsilon}
		\|f\|_0.
		\label{eq:phi_bound_sigma}
	\end{align}
	Moreover,
	\begin{equation}
		\|\eta\|_E
		\le
		\varepsilon^{1/2}|\eta|_1
		+
		\sigma^{1/2}\|\eta\|_0.
		\label{eq:eta_energy_sigma}
	\end{equation}
	Therefore, the triangle inequality gives
	\[
	\|u-u_{hb}\|_E
	\le
	\|\eta\|_E+\|\varphi\|_E.
	\]
	Using \eqref{eq:interp}, \eqref{eq:phi_bound_sigma}, and
	\eqref{eq:eta_energy_sigma}, we obtain
	\begin{align*}
		\|u-u_{hb}\|_E
		\le
		C\Bigg[
		&
		\left(
		\varepsilon^{1/2}
		+
		\sigma^{-1/2}
		\|\boldsymbol{\beta}\|_{0,\infty}
		\right)h
		+
		\sigma^{1/2}h^2
		\Bigg]|u|_2
	+
		C
		\frac{\delta_h}
		{\sqrt{\sigma}\,\varepsilon}
		\|f\|_0,
	\end{align*}
	which proves \eqref{eq:optimal_sigma}. Finally,
	\eqref{eq:delta_order} implies, for fixed
	$\varepsilon$, $\sigma$, $\boldsymbol{\beta}$, $f$, and $\tau$, that
	\[
	\|u-u_{hb}\|_E=O(h).
	\]
	This proves \eqref{eq:optimal_rate_sigma}.
\end{proof}


When $\sigma=0$, the energy norm contains only the diffusion-weighted
$H^1$-seminorm. The advective interpolation term is therefore treated
by integration by parts, which transfers the derivative from the
interpolation error to the test function and allows the
$O(h^2)$ estimate for $\|\eta\|_0$ to be used. Combined with
\eqref{eq:DD_error_common} and the stability estimate for the
reaction-free problem, this yields the optimal energy-norm rate.

\begin{theorem}[Optimal energy-norm estimate for $\sigma=0$]
	\label{thm:optimal_zero}
	Assume $\sigma=0$, and let
	$u\in H^2(\Omega)\cap H_0^1(\Omega)$ be the solution of
	\eqref{eq:model}. Let $u_{hb}\in V_{hb}^0$ be any solution of the DD
	problem \eqref{eq:DD}. Then
	\begin{align}
		\|u-u_{hb}\|_E
		\le{}&
		C\left(
		\varepsilon^{1/2}h
		+
		\varepsilon^{-1/2}
		\|\boldsymbol{\beta}\|_{0,\infty}h^2
		\right)|u|_2
		+
		C\delta_h\varepsilon^{-3/2}\|f\|_0,
		\label{eq:optimal_zero}
	\end{align}
	where $C>0$ is independent of $h$. Consequently, for fixed
	$\varepsilon$, $\boldsymbol{\beta}$, $f$, and $\tau$,
	\begin{equation}
		\|u-u_{hb}\|_E
		=
		O(h).
		\label{eq:optimal_rate_zero}
	\end{equation}
\end{theorem}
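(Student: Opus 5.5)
We follow the same structure as the proof of Theorem~\ref{thm:optimal_sigma}. We start from the error decomposition \eqref{eq:error_decomposition} and the identity \eqref{eq:phi_identity}. Since $\sigma=0$, this identity reads
\[
\varepsilon|\varphi|_1^2=\|\varphi\|_E^2=-B(\eta,\varphi)+D_h(u_{hb};u_{hb},\varphi).
\]
The task then splits into bounding the interpolation term and the artificial-diffusion term, each by a multiple of $\|\varphi\|_E$.

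\emph{Interpolation term.} The diffusive part is handled as before:
\[
\varepsilon|(\boldsymbol{\nabla}\eta,\boldsymbol{\nabla}\varphi)|\le\varepsilon^{1/2}|\eta|_1\|\varphi\|_E.
\]
The advective part $(\boldsymbol{\beta}\cdot\boldsymbol{\nabla}\eta,\varphi)$ cannot be controlled by $\sigma^{-1/2}\|\varphi\|_E$ as in the reactive case. Instead, we integrate by parts. Since $\boldsymbol{\nabla}\cdot\boldsymbol{\beta}=0$ and $\eta,\varphi\in H_0^1(\Omega)$,
\[
(\boldsymbol{\beta}\cdot\boldsymbol{\nabla}\eta,\varphi)=-(\eta,\boldsymbol{\beta}\cdot\boldsymbol{\nabla}\varphi).
\]
Therefore
\[
|(\boldsymbol{\beta}\cdot\boldsymbol{\nabla}\eta,\varphi)|\le\|\boldsymbol{\beta}\|_{0,\infty}\|\eta\|_0|\varphi|_1\le\varepsilon^{-1/2}\|\boldsymbol{\beta}\|_{0,\infty}\|\eta\|_0\|\varphi\|_E.
\]
The $O(h^2)$ bound for $\|\eta\|_0$ in \eqref{eq:interp} then produces the term $\varepsilon^{-1/2}\|\boldsymbol{\beta}\|_{0,\infty}h^2|u|_2$. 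Note that $\varphi=\Pi_hu-u_{hb}$ lies in $V_{hb}^0\subset H_0^1(\Omega)$, so the integration by parts is legitimate and produces no boundary terms.

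\emph{Artificial-diffusion term.} We use the common estimate \eqref{eq:DD_error_common} together with the reaction-free stability bound \eqref{eq:solution_H1_bound_sigma_zero}, $|u_{hb}|_1\le C_P\varepsilon^{-1}\|f\|_0$. This gives
\[
|D_h(u_{hb};u_{hb},\varphi)|\le C_P\delta_h\varepsilon^{-3/2}\|f\|_0\|\varphi\|_E.
\]

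Next we collect the two bounds, divide by $\|\varphi\|_E$ (the case $\|\varphi\|_E=0$ is trivial), and obtain
\[
\|\varphi\|_E\le\varepsilon^{1/2}|\eta|_1+\varepsilon^{-1/2}\|\boldsymbol{\beta}\|_{0,\infty}\|\eta\|_0+C_P\delta_h\varepsilon^{-3/2}\|f\|_0.
\]
We then apply the triangle inequality $\|u-u_{hb}\|_E\le\|\eta\|_E+\|\varphi\|_E$, where $\|\eta\|_E=\varepsilon^{1/2}|\eta|_1\le C_I\varepsilon^{1/2}h|u|_2$. Inserting \eqref{eq:interp} yields \eqref{eq:optimal_zero}. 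Finally, \eqref{eq:delta_order} gives $\delta_h\le C_\delta h$, which yields the $O(h)$ rate \eqref{eq:optimal_rate_zero}. Here the constant $C_\delta$ may be taken without the $\sigma$ contribution, since $q_T$ in \eqref{eq:qT} omits $\sigma$ when $\sigma=0$.

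The only nonroutine point is the advective interpolation term, because in the reaction-free case the energy norm gives no $L^2$ control of $\varphi$. The integration-by-parts step is the one to handle carefully. It relies on the divergence-free assumption on $\boldsymbol{\beta}$ and on the conformity of $V_{hb}^0$ in $H_0^1(\Omega)$. It is also exactly what makes the advective contribution appear at order $h^2$ rather than $h$.
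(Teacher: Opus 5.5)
Your proposal is correct and follows the paper's proof essentially step by step. The shared steps are the identity \eqref{eq:phi_identity}, integration by parts of the advective interpolation term to exploit $\|\eta\|_0=O(h^2)$, the common bound \eqref{eq:DD_error_common} combined with the reaction-free stability estimate, division by $\|\varphi\|_E$, and the triangle inequality together with $\delta_h\le C_\delta h$; your side remarks (conformity of $V_{hb}^0$ justifying the boundary-term-free integration by parts, and dropping $\sigma$ from $C_\delta$) are valid but do not change the argument.
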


\begin{proof}
	When $\sigma=0$, the energy norm reduces to
	\[
	\|v\|_E
	=
	\varepsilon^{1/2}|v|_1.
	\]
	We first estimate the interpolation term in
	\eqref{eq:phi_identity}. Its diffusive contribution satisfies
	\[
	\varepsilon
	\left|
	(\boldsymbol{\nabla}\eta,
	\boldsymbol{\nabla}\varphi)
	\right|
	\le
	\varepsilon^{1/2}
	|\eta|_1
	\|\varphi\|_E.
	\]
	For the advective contribution, using
	$\boldsymbol{\nabla}\cdot\boldsymbol{\beta}=0$ and the homogeneous
	boundary conditions, integration by parts gives
	\[
	(\boldsymbol{\beta}\cdot\boldsymbol{\nabla}\eta,\varphi)
	=
	-
	(\eta,\boldsymbol{\beta}\cdot\boldsymbol{\nabla}\varphi).
	\]
	Therefore,
	\begin{align*}
		\left|
		(\boldsymbol{\beta}\cdot\boldsymbol{\nabla}\eta,\varphi)
		\right|
		&\le
		\|\boldsymbol{\beta}\|_{0,\infty}
		\|\eta\|_0
		|\varphi|_1
		\\
		&=
		\varepsilon^{-1/2}
		\|\boldsymbol{\beta}\|_{0,\infty}
		\|\eta\|_0
		\|\varphi\|_E.
	\end{align*}
	Combining the diffusive and advective estimates above, we obtain
	\begin{equation}
		|B(\eta,\varphi)|
		\le
		\left[
		\varepsilon^{1/2}|\eta|_1
		+
		\varepsilon^{-1/2}
		\|\boldsymbol{\beta}\|_{0,\infty}
		\|\eta\|_0
		\right]
		\|\varphi\|_E.
		\label{eq:B_eta_phi_zero}
	\end{equation}
Applying the stability estimate
\eqref{eq:solution_H1_bound_sigma_zero} to
\eqref{eq:DD_error_common}, we get
	\begin{equation}
		\left|
		D_h(u_{hb};u_{hb},\varphi)
		\right|
		\le
		C_P\delta_h\varepsilon^{-3/2}
		\|f\|_0
		\|\varphi\|_E.
		\label{eq:DD_error_zero}
	\end{equation}
	Substituting \eqref{eq:B_eta_phi_zero} and
	\eqref{eq:DD_error_zero} into \eqref{eq:phi_identity} yields
	\begin{align*}
		\|\varphi\|_E^2
		\le
		\Big[
		&
		\varepsilon^{1/2}|\eta|_1
		+
		\varepsilon^{-1/2}
		\|\boldsymbol{\beta}\|_{0,\infty}
		\|\eta\|_0
		+
		C_P\delta_h\varepsilon^{-3/2}
		\|f\|_0
		\Big]
		\|\varphi\|_E.
	\end{align*}
	If $\|\varphi\|_E=0$, the desired estimate follows immediately.
	Otherwise, dividing by $\|\varphi\|_E$ gives
	\begin{align}
		\|\varphi\|_E
		\le{}&
		\varepsilon^{1/2}|\eta|_1
		+
		\varepsilon^{-1/2}
		\|\boldsymbol{\beta}\|_{0,\infty}
		\|\eta\|_0
		+
		C_P\delta_h\varepsilon^{-3/2}
		\|f\|_0.
		\label{eq:phi_bound_zero}
	\end{align}
	Moreover, since $\sigma=0$,
	\[
	\|\eta\|_E
	=
	\varepsilon^{1/2}|\eta|_1.
	\]
	The triangle inequality therefore gives
	\[
	\|u-u_{hb}\|_E
	\le
	\|\eta\|_E+\|\varphi\|_E.
	\]
	Using \eqref{eq:interp} and \eqref{eq:phi_bound_zero}, we obtain
	\begin{align*}
		\|u-u_{hb}\|_E
		\le{}&
		C\left(
		\varepsilon^{1/2}h
		+
		\varepsilon^{-1/2}
		\|\boldsymbol{\beta}\|_{0,\infty}h^2
		\right)|u|_2
		+
		C\delta_h\varepsilon^{-3/2}\|f\|_0,
	\end{align*}
	where the Poincaré constant has been absorbed into the generic constant
	$C$. This proves \eqref{eq:optimal_zero}. Finally, using
	\eqref{eq:delta_order}, we conclude that, for fixed
	$\varepsilon$, $\boldsymbol{\beta}$, $f$, and $\tau$,
	\[
	\|u-u_{hb}\|_E
	=
	O(h),
	\]
	which proves \eqref{eq:optimal_rate_zero}.
\end{proof}


\begin{remark}
	Theorems~\ref{thm:optimal_sigma} and~\ref{thm:optimal_zero} establish
	optimal first-order energy-norm convergence of the DD approximation
	with respect to mesh refinement. The constants in the estimates are
	independent of $h$ but depend explicitly on $\varepsilon$. In the
	reaction case, the constant also depends on $\sigma$. Moreover, in the
	presence of layers, the regularity factor $|u|_2$ may itself depend on
	$\varepsilon$. Consequently, the results establish the optimal rate for
	each fixed $\varepsilon>0$, but do not provide an estimate that is
	uniform in the singularly perturbed limit $\varepsilon\to0$.
\end{remark}
\subsection{Decay of the nonlinear artificial dissipation}
\label{sec:artificial_dissipation_decay}

The previous results establish first-order convergence of the DD
approximation in the energy norm. We now investigate separately the
decay of the nonlinear artificial dissipation generated by the DD
operator.

Let $u_h=\kappa_h(u_{hb})$ be the resolved component of the enriched
DD solution. We define
\begin{equation}
	A_h
	:=
	D_h(u_{hb};u_{hb},u_{hb})
	=
	\sum_{T\in\mathcal T_h^{\mathrm{act}}}
	\xi_T(u_h)|u_{hb}|_{1,T}^2.
	\label{eq:Ah_definition}
\end{equation}
By \eqref{eq:Dh_nonnegative}, we have $A_h\ge0$. We
therefore introduce the associated dissipation measure
\begin{equation}
	S_h:=A_h^{1/2}.
	\label{eq:Sh_definition}
\end{equation}
The aim of this subsection is to prove
\[
A_h=O(h^2),
\qquad\text{or, equivalently,}\qquad
S_h=O(h).
\]
This result shows that the nonlinear artificial diffusion vanishes at
the same asymptotic rate as the energy error under mesh refinement.

\begin{lemma}[Global residual estimate]
	\label{lem:global_residual}
	Let $u_{hb}\in V_{hb}^0$ be a solution of the DD problem, and let
	$u_h=\kappa_h(u_{hb})$. Define the broken global residual norm by
	\[
	\|R(u_h)\|_0^2
	:=
	\sum_{T\in\mathcal T_h}
	\|R_T(u_h)\|_{0,T}^2.
	\]
	Then
	\begin{equation}
		\|R(u_h)\|_0
		\le
		C_R\|f\|_0,
		\label{eq:global_residual}
	\end{equation}
	where
	\begin{equation}
		C_R
		=
		\begin{cases}
			\displaystyle
			1+C_\kappa
			+
			\frac{\|\boldsymbol{\beta}\|_{0,\infty}}
			{2\sqrt{\sigma\varepsilon}},
			& \sigma>0,
			\\[3mm]
			\displaystyle
			1+
			\frac{C_P}{\varepsilon}
			\|\boldsymbol{\beta}\|_{0,\infty},
			& \sigma=0.
		\end{cases}
		\label{eq:CR}
	\end{equation}
	In both cases, $C_R$ is independent of $h$.
\end{lemma}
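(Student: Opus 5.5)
The estimate will follow from the triangle inequality applied to the broken residual, followed by the stability bounds of Lemma~\ref{lem:energy_stability} and Lemma~\ref{lem:projection}. On each element, \eqref{residual-local} gives
\[
R_T(u_h)=\boldsymbol{\beta}\cdot\boldsymbol{\nabla}u_h+\sigma u_h-f .
\]
Summing the squared local norms and applying Minkowski's inequality in $\ell^2$ over the elements yields
\[
\|R(u_h)\|_0\le \|\boldsymbol{\beta}\|_{0,\infty}|u_h|_1+\sigma\|u_h\|_0+\|f\|_0 .
\]
Here the broken sums $\bigl(\sum_T|u_h|_{1,T}^2\bigr)^{1/2}$ and $\bigl(\sum_T\|u_h\|_{0,T}^2\bigr)^{1/2}$ coincide with the global quantities, since $u_h\in H_0^1(\Omega)$.

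\emph{Gradient term.} I use the $H^1$-seminorm nonexpansiveness \eqref{eq:kappa_H1_seminorm}, summed over $T$, to obtain $|u_h|_1\le|u_{hb}|_1$. I then insert \eqref{eq:solution_H1_bound_sigma} when $\sigma>0$, which gives the contribution $\|\boldsymbol{\beta}\|_{0,\infty}\|f\|_0/(2\sqrt{\sigma\varepsilon})$. When $\sigma=0$ I insert \eqref{eq:solution_H1_bound_sigma_zero} instead, which gives $C_P\varepsilon^{-1}\|\boldsymbol{\beta}\|_{0,\infty}\|f\|_0$. In the case $\sigma=0$ the reaction term is absent, so the constant $1+C_P\varepsilon^{-1}\|\boldsymbol{\beta}\|_{0,\infty}$ follows immediately.

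\emph{Reaction term ($\sigma>0$).} This is where the factor $C_\kappa$ comes from, and it is the only step needing care. The global form of \eqref{eq:kappa_stability} gives $\|u_h\|_0\le C_\kappa\|u_{hb}\|_0$. The $L^2$ bound $\|u_{hb}\|_0\le\sigma^{-1}\|f\|_0$ was derived inside the proof of Lemma~\ref{lem:energy_stability}; alternatively it follows from \eqref{eq:stability}, since $\sigma\|u_{hb}\|_0^2\le\|u_{hb}\|_E^2\le\sigma^{-1}\|f\|_0^2$. Together these give $\sigma\|u_h\|_0\le C_\kappa\|f\|_0$. The $\sigma$ cancels exactly, which explains why $C_\kappa$ appears in $C_R$ with no $\sigma$-weight.

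Adding the three contributions gives $C_R=1+C_\kappa+\|\boldsymbol{\beta}\|_{0,\infty}/(2\sqrt{\sigma\varepsilon})$. No factor depends on $h$, because the stability bounds and $C_\kappa$ are mesh-independent. The only real obstacle is recalling that the $L^2$ bound $\|u_{hb}\|_0\le\sigma^{-1}\|f\|_0$ is available. Using the weaker energy bound together with Poincaré's inequality would instead introduce an $\varepsilon$-dependent constant and fail to reproduce the stated $C_R$.
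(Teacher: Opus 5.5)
Your proposal is correct and follows the paper's proof essentially step by step. The steps match: the same triangle-inequality splitting of the broken residual, the seminorm nonexpansiveness of $\kappa_h$ combined with \eqref{eq:solution_H1_bound_sigma} or \eqref{eq:solution_H1_bound_sigma_zero} for the advective term, and, for the reaction term, the bound $\sigma\|u_{hb}\|_0\le\|f\|_0$ (which the paper also derives from \eqref{eq:stability}) together with the $L^2$-stability of $\kappa_h$.
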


\begin{proof}
	By the definition of the residual and the triangle inequality,
	\begin{equation}
		\|R(u_h)\|_0
		\le
		\|\boldsymbol{\beta}\|_{0,\infty}|u_h|_1
		+
		\sigma\|u_h\|_0
		+
		\|f\|_0.
		\label{eq:global_residual_intermediate}
	\end{equation}
	Assume first that $\sigma>0$. Combining estimate \eqref{eq:kappa_H1_seminorm} with \eqref{eq:solution_H1_bound_sigma}, we obtain
	\[
	|u_h|_1
	\le
	\frac{\|f\|_0}
	{2\sqrt{\sigma\varepsilon}}.
	\]
Moreover, for $\sigma>0$, estimate \eqref{eq:stability} gives
\[
\sigma\|u_{hb}\|_0^2
\le
\|u_{hb}\|_E^2
\le
\frac{1}{\sigma}\|f\|_0^2.
\]
Therefore,
\[
\sigma\|u_{hb}\|_0
\le
\|f\|_0.
\]
	The $L^2$-stability \eqref{eq:kappa_stability} therefore gives
	\[
	\sigma\|u_h\|_0
	\le
	C_\kappa\|f\|_0.
	\]
	Substituting these estimates into
	\eqref{eq:global_residual_intermediate}, we find
	\begin{equation}\label{estim-R1}
	\|R(u_h)\|_0
	\le
	\left(
	1+C_\kappa
	+
	\frac{\|\boldsymbol{\beta}\|_{0,\infty}}
	{2\sqrt{\sigma\varepsilon}}
	\right)
	\|f\|_0,
	\end{equation}

	Now assume that $\sigma=0$. By
	\eqref{eq:kappa_H1_seminorm} and
	\eqref{eq:solution_H1_bound_sigma_zero},
	\[
	|u_h|_1
	\le
	\frac{C_P}{\varepsilon}\|f\|_0.
	\]
	Since the reaction term is absent,
	\eqref{eq:global_residual_intermediate} reduces to
	\[
	\|R(u_h)\|_0
	\le
	\|\boldsymbol{\beta}\|_{0,\infty}|u_h|_1
	+
	\|f\|_0.
	\]
	Consequently,
	\begin{equation}\label{estim-R2}
	\|R(u_h)\|_0
	\le
	\left(
	1+
	\frac{C_P}{\varepsilon}
	\|\boldsymbol{\beta}\|_{0,\infty}
	\right)
	\|f\|_0.
	\end{equation}
	The two estimates \eqref{estim-R1} and \eqref{estim-R2} prove \eqref{eq:global_residual} with $C_R$ given by
	\eqref{eq:CR}.
\end{proof}

We introduce the auxiliary quantity
\begin{equation}
	C_h
	:=
	D_h(u_{hb};u_{hb},\Pi_hu).
	\label{eq:Ch_definition}
\end{equation}
By the error decomposition \eqref{eq:error_decomposition},
\[
u_{hb}
=
\Pi_hu-\varphi.
\]
Since $D_h(u_{hb};u_{hb},\cdot)$ is linear in its third argument,
\eqref{eq:Ah_definition} and \eqref{eq:Ch_definition} give
\[
A_h
=
C_h-D_h(u_{hb};u_{hb},\varphi).
\]
Substituting this identity into \eqref{eq:phi_identity}, we obtain
\begin{equation}
	\|\varphi\|_E^2+A_h
	=
	-B(\eta,\varphi)+C_h.
	\label{eq:error_dissipation_identity}
\end{equation}

By the orthogonality established in
Lemma~\ref{lem:orthogonal_decomposition} and the fact that
$\Pi_hu\in V_h^0$, the bubble component of $u_{hb}$ does not
contribute to $C_h$. Therefore,
\[
C_h
=
\sum_{T\in\mathcal T_h^{\mathrm{act}}}
\xi_T(u_h)
\left(
\boldsymbol{\nabla}u_h,
\boldsymbol{\nabla}\Pi_hu
\right)_T.
\]
Writing
\[
\Pi_hu=(\Pi_hu-u)+u,
\]
we decompose
\begin{equation}
	C_h=C_{h,1}+C_{h,2},
	\label{eq:Ch_decomposition}
\end{equation}
where
\begin{align*}
	C_{h,1}
	&:=
	\sum_{T\in\mathcal T_h^{\mathrm{act}}}
	\xi_T(u_h)
	\left(
	\boldsymbol{\nabla}u_h,
	\boldsymbol{\nabla}(\Pi_hu-u)
	\right)_T,
	\\
	C_{h,2}
	&:=
	\sum_{T\in\mathcal T_h^{\mathrm{act}}}
	\xi_T(u_h)
	\left(
	\boldsymbol{\nabla}u_h,
	\boldsymbol{\nabla}u
	\right)_T.
\end{align*}

\begin{lemma}[Bounds for the auxiliary diffusion terms]
	\label{lem:Ch1}
	\label{lem:Ch2}
	For every $\theta>0$,
	\begin{align}
		|C_{h,1}|
		&\le
		\theta A_h
		+
		C\theta^{-1}h^3|u|_2^2,
		\label{eq:Ch1_estimate_compact}
		\\
		|C_{h,2}|
		&\le
		C\varepsilon^{-1}h^2
		\|\boldsymbol{\beta}\|_{0,\infty}
		C_R\|f\|_0|u|_1,
		\label{eq:Ch2_final_compact}
	\end{align}
	where $C>0$ is independent of $h$, and $C_R$ is given in
	Lemma~\ref{lem:global_residual}.
\end{lemma}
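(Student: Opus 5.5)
The two bounds are proved separately. In both, the weight $\xi_T(u_h)$ is controlled elementwise and the sums are then closed with the discrete Cauchy--Schwarz inequality. For $C_{h,1}$ the point is to use $\xi_T(u_h)\ge0$ as a weight and split symmetrically. For $C_{h,2}$ the key observation is that the estimate only runs over \emph{active} elements, where the Péclet condition supplies an extra factor $h_T\|\boldsymbol\beta\|_{0,\infty,T}/\varepsilon$.

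\emph{Bound for $C_{h,1}$.} On each active element, the Cauchy--Schwarz inequality in the weighted form gives
\[
|C_{h,1}|\le\Bigl(\sum_{T\in\mathcal T_h^{\mathrm{act}}}\xi_T(u_h)|u_h|_{1,T}^2\Bigr)^{1/2}\Bigl(\sum_{T\in\mathcal T_h^{\mathrm{act}}}\xi_T(u_h)|\eta|_{1,T}^2\Bigr)^{1/2}.
\]
By the local nonexpansiveness \eqref{eq:kappa_H1_seminorm}, $|u_h|_{1,T}\le|u_{hb}|_{1,T}$, so the first factor is at most $A_h^{1/2}$ by \eqref{eq:Ah_definition}. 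By \eqref{eq:xi_bound} and \eqref{eq:delta_h}, the second factor is at most $\delta_h^{1/2}|\eta|_1$. Using \eqref{eq:delta_order} and \eqref{eq:interp}, this is at most $C_\delta^{1/2}C_Ih^{3/2}|u|_2$. Young's inequality $ab\le\theta a^2+(4\theta)^{-1}b^2$ then yields \eqref{eq:Ch1_estimate_compact}, with $C=C_\delta C_I^2/4$.

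\emph{Bound for $C_{h,2}$.} First, from the definition of $\xi_T$, and since $|u_h|_{1,T}\le\mathcal N_T(u_h)+\tau$ in both cases $\sigma>0$ and $\sigma=0$, we get
\[
\xi_T(u_h)\,|u_h|_{1,T}\le\chi_T\,\mu(h_T)\,\|R_T(u_h)\|_{0,T}.
\]
This step removes the dependence on $\tau$ and on the size of $u_h$. Hence
\[
|C_{h,2}|\le\sum_{T\in\mathcal T_h^{\mathrm{act}}}\mu(h_T)\|R_T(u_h)\|_{0,T}|u|_{1,T}.
\]
Second, on every active element $Pe_T>1$, that is, $1<h_T\|\boldsymbol\beta\|_{0,\infty,T}/(2\varepsilon)$. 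Multiplying each summand by this quantity and using $\mu(h_T)\le C_\mu h_T$ gives a factor $C_\mu h^2\|\boldsymbol\beta\|_{0,\infty}/(2\varepsilon)$. What remains is $\sum_T\|R_T(u_h)\|_{0,T}|u|_{1,T}\le\|R(u_h)\|_0|u|_1$. Finally, Lemma~\ref{lem:global_residual} bounds $\|R(u_h)\|_0\le C_R\|f\|_0$, which gives \eqref{eq:Ch2_final_compact} with $C=C_\mu/2$.

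The main obstacle is the $C_{h,2}$ estimate. The naive bound $\xi_T\le\mu(h_T)q_T$ yields only $O(h)$, which is not enough. The first thing to try, as above, is to exploit the active-set restriction through the Péclet criterion to gain the second power of $h$, at the price of the factor $\varepsilon^{-1}\|\boldsymbol\beta\|_{0,\infty}$ that appears in the statement. One should check that the cancellation $\xi_T|u_h|_{1,T}\le\mu\|R_T\|$ holds for both denominators $\mathcal N_T$, which it does since $|u_h|_{1,T}\le\|u_h\|_{1,T}$.
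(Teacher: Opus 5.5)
Your proof is correct and follows the paper's argument step for step. For $C_{h,1}$ you use the weighted Cauchy--Schwarz split with $|u_h|_{1,T}\le|u_{hb}|_{1,T}$ followed by Young's inequality, and for $C_{h,2}$ you use the cancellation $\xi_T(u_h)|u_h|_{1,T}\le\mu(h_T)\|R_T(u_h)\|_{0,T}$, the P\'eclet condition on active elements and Lemma~\ref{lem:global_residual}; the only cosmetic difference is that you bound the second factor of $C_{h,1}$ globally by $\delta_h|\eta|_1^2$, whereas the paper bounds it elementwise by $\sum_T h_T^3|u|_{2,T}^2$, and both give the same $h^3$.
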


\begin{proof}
	By the definition of $C_{h,1}$ and the weighted Cauchy--Schwarz
	inequality,
	\begin{align*}
		|C_{h,1}|
		&\le
		\left(
		\sum_{T\in\mathcal T_h^{\mathrm{act}}}
		\xi_T(u_h)|u_h|_{1,T}^2
		\right)^{1/2}
		\left(
		\sum_{T\in\mathcal T_h^{\mathrm{act}}}
		\xi_T(u_h)|\Pi_hu-u|_{1,T}^2
		\right)^{1/2}.
	\end{align*}
	Since $u_h=\kappa_h(u_{hb})$, the estimate
	\eqref{eq:kappa_H1_seminorm} gives
	\[
	|u_h|_{1,T}
	\le
	|u_{hb}|_{1,T}.
	\]
	Using also $\xi_T(u_h)\ge0$ and \eqref{eq:Ah_definition}, we obtain
	\[
	\sum_{T\in\mathcal T_h^{\mathrm{act}}}
	\xi_T(u_h)|u_h|_{1,T}^2
	\le
	A_h.
	\]
	For the second factor, the bound \eqref{eq:xi_bound},
	the estimate $\mu(h_T)\le C_\mu h_T$, and the local interpolation
	estimate give
	\begin{align*}
		\sum_{T\in\mathcal T_h^{\mathrm{act}}}
		\xi_T(u_h)|\Pi_hu-u|_{1,T}^2
		&\le
		C
		\sum_{T\in\mathcal T_h^{\mathrm{act}}}
		h_T|\Pi_hu-u|_{1,T}^2
		\\
		&\le
		C
		\sum_{T\in\mathcal T_h^{\mathrm{act}}}
		h_T^3|u|_{2,T}^2
		\\
		&\le
		Ch^3|u|_2^2.
	\end{align*}
	Consequently,
	\[
	|C_{h,1}|
	\le
	Ch^{3/2}A_h^{1/2}|u|_2.
	\]
	Young's inequality then yields, for every $\theta>0$,
	\[
	|C_{h,1}|
	\le
	\theta A_h
	+
	C\theta^{-1}h^3|u|_2^2,
	\]
	which proves \eqref{eq:Ch1_estimate_compact}.
	
	We next estimate $C_{h,2}$. Only active elements contribute to this
	term. For either $\sigma>0$ or $\sigma=0$, the denominator in the
	definition of $\xi_T(u_h)$ satisfies
	\[
	\mathcal N_T(u_h)+\tau
	\ge
	|u_h|_{1,T}.
	\]
	Hence,
	\begin{equation}
		\xi_T(u_h)|u_h|_{1,T}
		\le
		\mu(h_T)\|R_T(u_h)\|_{0,T}.
		\label{eq:xi_gradient_residual}
	\end{equation}
	Moreover, if $T\in\mathcal T_h^{\mathrm{act}}$, then $Pe_T>1$, and
	therefore
	\[
	1
	<
	\frac{
		\|\boldsymbol{\beta}\|_{0,\infty,T}h_T
	}
	{2\varepsilon}.
	\]
	It follows that
	\[
	h_T
	<
	\frac{
		\|\boldsymbol{\beta}\|_{0,\infty,T}h_T^2
	}
	{2\varepsilon}.
	\]
	Combining this inequality with
	$\mu(h_T)\le C_\mu h_T$, we obtain
	\begin{equation}
		\mu(h_T)
		\le
		C\varepsilon^{-1}
		\|\boldsymbol{\beta}\|_{0,\infty,T}h_T^2,
		\qquad
		T\in\mathcal T_h^{\mathrm{act}}.
		\label{eq:mu_active_bound}
	\end{equation}
	Using \eqref{eq:xi_gradient_residual} and
	\eqref{eq:mu_active_bound} in the definition of $C_{h,2}$ gives
	\begin{align*}
		|C_{h,2}|
		&\le
		\sum_{T\in\mathcal T_h^{\mathrm{act}}}
		\xi_T(u_h)|u_h|_{1,T}|u|_{1,T}
		\\
		&\le
		\sum_{T\in\mathcal T_h^{\mathrm{act}}}
		\mu(h_T)
		\|R_T(u_h)\|_{0,T}|u|_{1,T}
		\\
		&\le
		C\varepsilon^{-1}
		\sum_{T\in\mathcal T_h^{\mathrm{act}}}
		\|\boldsymbol{\beta}\|_{0,\infty,T}
		h_T^2
		\|R_T(u_h)\|_{0,T}|u|_{1,T}
		\\
		&\le
		C\varepsilon^{-1}h^2
		\|\boldsymbol{\beta}\|_{0,\infty}
		\sum_{T\in\mathcal T_h^{\mathrm{act}}}
		\|R_T(u_h)\|_{0,T}|u|_{1,T}.
	\end{align*}
	Applying the Cauchy--Schwarz inequality over the active elements, we
	find
	\begin{align*}
		|C_{h,2}|
		&\le
		C\varepsilon^{-1}h^2
		\|\boldsymbol{\beta}\|_{0,\infty}
		\left(
		\sum_{T\in\mathcal T_h^{\mathrm{act}}}
		\|R_T(u_h)\|_{0,T}^2
		\right)^{1/2}
		\left(
		\sum_{T\in\mathcal T_h^{\mathrm{act}}}
		|u|_{1,T}^2
		\right)^{1/2}
		\\
		&\le
		C\varepsilon^{-1}h^2
		\|\boldsymbol{\beta}\|_{0,\infty}
		\|R(u_h)\|_0|u|_1.
	\end{align*}
	Finally, applying the global residual estimate
	\eqref{eq:global_residual}, we obtain
	\[
	|C_{h,2}|
	\le
	C\varepsilon^{-1}h^2
	\|\boldsymbol{\beta}\|_{0,\infty}
	C_R\|f\|_0|u|_1,
	\]
	which proves \eqref{eq:Ch2_final_compact}.
\end{proof}

\begin{theorem}[First-order decay of the nonlinear artificial dissipation]
	\label{thm:Ah_decay}
	Let $u\in H^2(\Omega)\cap H_0^1(\Omega)$ be the solution of
	\eqref{eq:model}, and let $u_{hb}\in V_{hb}^0$ be any solution of the
	DD problem \eqref{eq:DD}. Then, for fixed problem data,
	\begin{equation}
		A_h
		\le
		Ch^2,
		\label{eq:Ah_decay}
	\end{equation}
	where $C>0$ is independent of $h$. Consequently,
	\begin{equation}
		S_h=A_h^{1/2}=O(h).
		\label{eq:Sh_rate}
	\end{equation}
	The result holds for both $\sigma>0$ and $\sigma=0$.
\end{theorem}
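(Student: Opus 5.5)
The plan is to start from the error--dissipation identity \eqref{eq:error_dissipation_identity},
\[
\|\varphi\|_E^2+A_h=-B(\eta,\varphi)+C_{h,1}+C_{h,2},
\]
which holds for any discrete solution. Each term on the right will be bounded so that the $\|\varphi\|_E^2$ and $A_h$ contributions are absorbed into the left-hand side. Whatever remains should be $O(h^2)$ for fixed data.

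For the Galerkin interpolation term, I reuse the bounds \eqref{eq:B_eta_phi_sigma} (for $\sigma>0$) and \eqref{eq:B_eta_phi_zero} (for $\sigma=0$). Together with \eqref{eq:interp}, they give $|B(\eta,\varphi)|\le M_h\|\varphi\|_E$, where $M_h\le C h|u|_2$ and the constant depends on $\varepsilon,\sigma,\boldsymbol{\beta}$. Young's inequality then gives $|B(\eta,\varphi)|\le \tfrac12\|\varphi\|_E^2+\tfrac12 M_h^2$, and $M_h^2=O(h^2)$.

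For the auxiliary diffusion terms, I apply Lemma~\ref{lem:Ch1} with $\theta=\tfrac12$. This gives $|C_{h,1}|\le \tfrac12 A_h + C h^3|u|_2^2$ and $|C_{h,2}|\le C\varepsilon^{-1}h^2\|\boldsymbol{\beta}\|_{0,\infty}C_R\|f\|_0|u|_1$. Lemma~\ref{lem:global_residual} shows that $C_R$ is independent of $h$, in both the case $\sigma>0$ and the case $\sigma=0$. Absorbing the terms, I obtain
\[
\tfrac12\|\varphi\|_E^2+\tfrac12 A_h\le \tfrac12 M_h^2+Ch^3|u|_2^2+C\varepsilon^{-1}h^2\|\boldsymbol{\beta}\|_{0,\infty}C_R\|f\|_0|u|_1 .
\]
Since $h\le\operatorname{diam}(\Omega)$, the $h^3$ term is bounded by $Ch^2$. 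This yields $A_h\le Ch^2$, with $C$ depending on $\varepsilon,\sigma,\boldsymbol{\beta},f,\tau,|u|_1,|u|_2$ but not on $h$. Taking square roots gives \eqref{eq:Sh_rate}.

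The main obstacle is already isolated in Lemma~\ref{lem:Ch2}, namely the $C_{h,2}$ term. A naive bound $\xi_T\le\mu(h_T)q_T=O(h)$ would only give $O(h)$ for $A_h$. The sharper estimate uses that $Pe_T>1$ on active elements, which upgrades $\mu(h_T)$ to $O(h_T^2/\varepsilon)$, together with the $h$-uniform residual bound. Given that lemma, the remaining argument is straightforward absorption. The only point to check is that the $B(\eta,\varphi)$ bound is genuinely $O(h)$ in each case. For $\sigma>0$ this follows from the weighted estimate; for $\sigma=0$ it follows from integrating the advective part by parts. As a by-product, the same inequality gives $\|\varphi\|_E=O(h)$ and hence the combined measure $\|u-u_{hb}\|_E+S_h=O(h)$.
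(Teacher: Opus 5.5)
Your proposal is correct and follows the paper's route. You start from \eqref{eq:error_dissipation_identity}, apply Young's inequality to $B(\eta,\varphi)$ via \eqref{eq:B_eta_phi_sigma} or \eqref{eq:B_eta_phi_zero}, invoke Lemma~\ref{lem:Ch1} for $C_{h,1}$ and $C_{h,2}$, and absorb $\|\varphi\|_E^2$ and $A_h$ into the left-hand side, differing only cosmetically (the paper takes $\gamma=\theta=\tfrac14$ and assumes $h\le 1$, whereas your weights $\tfrac12$ and the bound $h\le\operatorname{diam}(\Omega)$ work equally well).
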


\begin{proof}
	It follows from \eqref{eq:error_dissipation_identity} and
	\eqref{eq:Ch_decomposition} that
	\begin{equation}
		\|\varphi\|_E^2+A_h
		\le
		|B(\eta,\varphi)|
		+
		|C_{h,1}|
		+
		|C_{h,2}|.
		\label{eq:Ah_basic_bound}
	\end{equation}
	Assume first that $\sigma>0$. Combining
	\eqref{eq:B_eta_phi_sigma} with the interpolation estimates
	\eqref{eq:interp}, we obtain
	\[
	|B(\eta,\varphi)|
	\le
	C
	\left[
	\left(
	\varepsilon^{1/2}
	+
	\sigma^{-1/2}
	\|\boldsymbol{\beta}\|_{0,\infty}
	\right)h
	+
	\sigma^{1/2}h^2
	\right]
	|u|_2\|\varphi\|_E.
	\]
	Hence, Young's inequality gives, for every $\gamma>0$,
	\begin{align}
		|B(\eta,\varphi)|
		\le{}&
		\gamma\|\varphi\|_E^2
		+
		C\gamma^{-1}
		\left[
		\left(
		\varepsilon
		+
		\sigma^{-1}
		\|\boldsymbol{\beta}\|_{0,\infty}^2
		\right)h^2
		+
		\sigma h^4
		\right]
		|u|_2^2.
		\label{eq:B_eta_Young_sigma}
	\end{align}
	
	Now assume that $\sigma=0$. Combining
	\eqref{eq:B_eta_phi_zero} with \eqref{eq:interp}, we find
	\[
	|B(\eta,\varphi)|
	\le
	C
	\left(
	\varepsilon^{1/2}h
	+
	\varepsilon^{-1/2}
	\|\boldsymbol{\beta}\|_{0,\infty}h^2
	\right)
	|u|_2\|\varphi\|_E.
	\]
	Another application of Young's inequality gives
	\begin{align}
		|B(\eta,\varphi)|
		\le{}&
		\gamma\|\varphi\|_E^2
		+
		C\gamma^{-1}
		\left(
		\varepsilon h^2
		+
		\varepsilon^{-1}
		\|\boldsymbol{\beta}\|_{0,\infty}^2h^4
		\right)
		|u|_2^2.
		\label{eq:B_eta_Young_zero}
	\end{align}
 Applying \eqref{eq:Ch1_estimate_compact} and
 \eqref{eq:Ch2_final_compact} to \eqref{eq:Ah_basic_bound}, and using
 either \eqref{eq:B_eta_Young_sigma} or
 \eqref{eq:B_eta_Young_zero}, according to the value of $\sigma$, we
 obtain, for every $\gamma,\theta>0$,
	\[
	\|\varphi\|_E^2+A_h
	\le
	\gamma\|\varphi\|_E^2
	+
	\theta A_h
	+
	C\left(h^2+h^3+h^4\right),
	\]
	where $C>0$ may depend on the fixed problem data but is independent of
	$h$.
	Choosing
	\[
	\gamma=\theta=\frac14
	\]
	and absorbing the corresponding terms into the left-hand side yields
	\[
	\frac34\|\varphi\|_E^2
	+
	\frac34A_h
	\le
	C\left(h^2+h^3+h^4\right).
	\]
	For $h\le1$,
	\[
	h^3\le h^2,
	\qquad
	h^4\le h^2,
	\]
	and hence
	\[
	\|\varphi\|_E^2+A_h
	\le
	Ch^2.
	\]
	In particular,
	\[
	A_h\le Ch^2.
	\]
	Hence,
	\[
	S_h = A_h^{1/2} \le Ch,
	\]
	which proves \eqref{eq:Ah_decay} and \eqref{eq:Sh_rate}.
\end{proof}

\begin{remark}
	Theorem~\ref{thm:Ah_decay} describes the decay of the nonlinear
	artificial dissipation without requiring the artificial-diffusion
	operator $D_h$ to have already vanished. Along a refinement sequence for which active elements are
	still present, the associated dissipation measure satisfies
	\[
	S_h=A_h^{1/2}=O(h).
	\]
	This behavior is consistent with the local Péclet switch. Indeed, for
	fixed $\varepsilon>0$ and bounded $\boldsymbol{\beta}$, sufficiently
	fine meshes satisfy $Pe_T\le1$ on every element. In that regime,
	$\xi_T=0$ throughout the mesh, and hence $A_h=0$ exactly. Thus, the
	estimate characterizes the decay of the artificial dissipation in the
	active regime and continues to hold after the artificial-diffusion operator has been completely deactivated.
\end{remark}

\subsection{Energy error and artificial dissipation}
\label{sec:energy_error_dissipation}

The analysis in \cite{Santos-etal:2021} yielded an $O(h^{1/2})$
estimate for a quantity involving both the energy error and the
nonlinear artificial dissipation. The separate first-order estimates
established in the preceding subsections show that this stronger
quantity also converges with order $O(h)$.

\begin{theorem}[Energy-error and artificial-dissipation estimate]
	\label{thm:combined_hb}
	Let $u\in H^2(\Omega)\cap H_0^1(\Omega)$ be the solution of
	\eqref{eq:model}, and let $u_{hb}\in V_{hb}^0$ be any solution of the
	DD problem \eqref{eq:DD}. Then, for fixed problem data,
	\begin{equation}
		\left(
		\|u-u_{hb}\|_E^2+A_h
		\right)^{1/2}
		\le
		Ch,
		\label{eq:combined_hb_rate}
	\end{equation}
	where $C>0$ is independent of $h$. The result holds for both
	$\sigma>0$ and $\sigma=0$.
\end{theorem}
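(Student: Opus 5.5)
The statement is a direct consequence of the two separate first-order results already established. Theorem~\ref{thm:optimal_sigma} (for $\sigma>0$) and Theorem~\ref{thm:optimal_zero} (for $\sigma=0$) give
\[
\|u-u_{hb}\|_E\le C_1h
\]
for fixed data. Here we use \eqref{eq:delta_order} to convert the $\delta_h$-term into $O(h)$, and $h\le1$ (or $h\le\operatorname{diam}\Omega$) to absorb the $h^2$ terms. Theorem~\ref{thm:Ah_decay} gives $A_h\le C_2h^2$ in both cases. Squaring the first bound and adding the second yields
\[
\|u-u_{hb}\|_E^2+A_h\le (C_1^2+C_2)h^2,
\]
and taking square roots gives \eqref{eq:combined_hb_rate} with $C=(C_1^2+C_2)^{1/2}$.

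Alternatively, the proof of Theorem~\ref{thm:Ah_decay} already gives the stronger intermediate bound $\|\varphi\|_E^2+A_h\le Ch^2$, with $\varphi=\Pi_hu-u_{hb}$. One can then write
\[
\|u-u_{hb}\|_E^2\le 2\|\eta\|_E^2+2\|\varphi\|_E^2 ,
\]
and bound $\|\eta\|_E\le C(\varepsilon^{1/2}h+\sigma^{1/2}h^2)|u|_2$ via \eqref{eq:interp}. I would present this route, since it uses a single inequality for both components and avoids invoking two theorems separately.

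The only point needing care is uniformity. The constants must depend only on $\varepsilon,\sigma,\boldsymbol\beta,f,\tau,|u|_2,C_\mu,C_\kappa,C_P$ and shape regularity, not on $h$. Both cited results hold for \emph{any} discrete solution, so no uniqueness hypothesis enters. No step is a real obstacle; the main thing is to record the restriction $h\le1$ (or a rescaled version) used to absorb the higher powers of $h$.
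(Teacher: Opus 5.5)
Your first route is the paper's proof: the paper cites Theorems~\ref{thm:optimal_sigma}--\ref{thm:optimal_zero} for $\|u-u_{hb}\|_E\le C_Eh$ and Theorem~\ref{thm:Ah_decay} for $A_h^{1/2}\le C_Ah$. It then combines them via $(a^2+b^2)^{1/2}\le a+b$ rather than by squaring, and that difference is immaterial.

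The route you say you would present is a genuine, if mild, variant. You drop the two energy theorems. Instead you take the bound $\|\varphi\|_E^2+A_h\le Ch^2$ reached at the end of the proof of Theorem~\ref{thm:Ah_decay}, and add the interpolation error through $\|\eta+\varphi\|_E^2\le 2\|\eta\|_E^2+2\|\varphi\|_E^2$. This is correct. It shows that the energy--dissipation identity \eqref{eq:error_dissipation_identity} alone delivers both halves of the combined estimate, so the separate energy theorems are logically dispensable for this statement.

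There are two caveats. First, the bound you rely on appears only inside a proof, not in a stated result. You should either promote it to the statement of Theorem~\ref{thm:Ah_decay} or re-derive it. Second, your energy bound then inherits that argument's ingredients, notably the P\'eclet-based estimate \eqref{eq:mu_active_bound} used for $C_{h,2}$ and the absorption under $h\le 1$. By contrast, the paper's direct estimate \eqref{eq:DD_error_common} gives $\|u-u_{hb}\|_E=O(h)$ from $\delta_h=O(h)$ alone. That estimate does not use the P\'eclet switch and comes with explicitly tracked data dependence.

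For the combined quantity your choice costs nothing, since bounding $A_h$ requires the switch-based argument anyway.
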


\begin{proof}
	By Theorems~\ref{thm:optimal_sigma} and
	\ref{thm:optimal_zero}, there exists a constant $C_E>0$, independent
	of $h$, such that
	\[
	\|u-u_{hb}\|_E
	\le
	C_Eh
	\]
	for both $\sigma>0$ and $\sigma=0$. Moreover,
	Theorem~\ref{thm:Ah_decay} gives
	\[
	A_h^{1/2}
	\le
	C_Ah,
	\]
	where $C_A>0$ is independent of $h$. Therefore,
	\[
	\left(
	\|u-u_{hb}\|_E^2+A_h
	\right)^{1/2}
	\le
	\|u-u_{hb}\|_E+A_h^{1/2}
	\le
	(C_E+C_A)h,
	\]
	which proves \eqref{eq:combined_hb_rate}.
\end{proof}


\begin{corollary}[Convergence of the resolved approximation]
	\label{cor:resolved_scale_convergence}
	Let the assumptions of Theorem~\ref{thm:combined_hb} hold, and let
	\[
	u_h:=\kappa_h(u_{hb})\in V_h^0
	\]
	be the resolved component of the DD approximation. Then
	\begin{equation}
		\|u-u_h\|_E
		\le
		(1+C_{\kappa,E})
		\|u-\Pi_hu\|_E
		+
		C_{\kappa,E}
		\|u-u_{hb}\|_E,
		\label{eq:resolved_error_bound}
	\end{equation}
	where $C_{\kappa,E}$ is the energy-norm stability constant in
	\eqref{eq:kappa_energy_stability}. Consequently, for fixed problem
	data,
	\begin{equation}
		\|u-u_h\|_E
		\le
		Ch,
		\label{eq:resolved_energy_rate}
	\end{equation}
	and
	\begin{equation}
		\left(
		\|u-u_h\|_E^2+A_h
		\right)^{1/2}
		\le
		Ch,
		\label{eq:resolved_combined_rate}
	\end{equation}
	where $C>0$ is independent of $h$. The result holds for both
	$\sigma>0$ and $\sigma=0$.
\end{corollary}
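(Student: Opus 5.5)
The proof reduces \eqref{eq:resolved_error_bound} to a triangle inequality, using the fact that $\kappa_h$ leaves $V_h^0$ invariant. Since $\Pi_hu\in V_h^0$, its bubble component vanishes, so $\kappa_h(\Pi_hu)=\Pi_hu$. By linearity of $\kappa_h$,
\[
u-u_h=(u-\Pi_hu)+\kappa_h(\Pi_hu-u_{hb}).
\]
Both $\Pi_hu$ and $u_{hb}$ lie in $V_{hb}^0$, so the energy-norm stability \eqref{eq:kappa_energy_stability} of Lemma~\ref{lem:projection} applies to the second term:
\[
\|\kappa_h(\Pi_hu-u_{hb})\|_E\le C_{\kappa,E}\|\Pi_hu-u_{hb}\|_E .
\]
We then split $\Pi_hu-u_{hb}=(\Pi_hu-u)+(u-u_{hb})$ and apply the triangle inequality once more. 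This gives
\[
\|u-u_h\|_E\le\|u-\Pi_hu\|_E+C_{\kappa,E}\bigl(\|u-\Pi_hu\|_E+\|u-u_{hb}\|_E\bigr),
\]
which is exactly \eqref{eq:resolved_error_bound}.

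For the rate \eqref{eq:resolved_energy_rate}, we bound the two terms on the right separately. The interpolation term satisfies $\|u-\Pi_hu\|_E\le\varepsilon^{1/2}|\eta|_1+\sigma^{1/2}\|\eta\|_0\le C(\varepsilon^{1/2}h+\sigma^{1/2}h^2)|u|_2$ by \eqref{eq:interp}. For $h\le1$ this is $O(h)$ with data-dependent constants, as in \eqref{eq:eta_energy_sigma}; when $\sigma=0$ only the seminorm part is present. The second term satisfies $\|u-u_{hb}\|_E\le C_Eh$ by Theorems~\ref{thm:optimal_sigma} and~\ref{thm:optimal_zero}, using \eqref{eq:delta_order} to convert $\delta_h$ into $O(h)$.

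For \eqref{eq:resolved_combined_rate}, the measure $A_h$ is the same quantity as in Theorem~\ref{thm:Ah_decay}, because it is defined from $u_{hb}$ and $u_h=\kappa_h(u_{hb})$. Hence $A_h^{1/2}\le C_Ah$. Using $(a^2+b^2)^{1/2}\le a+b$ together with \eqref{eq:resolved_energy_rate} then gives the bound.

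None of these steps is difficult. The only point needing care is the identity $\kappa_h(\Pi_hu)=\Pi_hu$: it holds because the decomposition $V_{hb}^0=V_h^0\oplus V_b$ is direct and $\kappa_h$ is the associated projection. We must also apply $\kappa_h$ only to elements of $V_{hb}^0$, never to $u$ itself, which the splitting through $\Pi_hu$ guarantees.
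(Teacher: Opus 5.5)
Your proposal is correct and follows the paper's proof essentially step by step: the identity $\kappa_h(\Pi_hu)=\Pi_hu$, linearity and energy-norm stability of $\kappa_h$, two triangle inequalities, and then the interpolation estimate combined with the $O(h)$ bounds for $\|u-u_{hb}\|_E$ and $A_h^{1/2}$. The only cosmetic difference is that you cite Theorems~\ref{thm:optimal_sigma}, \ref{thm:optimal_zero} and~\ref{thm:Ah_decay} directly, whereas the paper cites Theorem~\ref{thm:combined_hb}, which packages those same results.
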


\begin{proof}
	Since $\Pi_hu\in V_h^0$ and $\kappa_h$ is the identity on $V_h^0$,
	\[
	\kappa_h(\Pi_hu)=\Pi_hu.
	\]
	The linearity and energy-norm stability of $\kappa_h$ therefore give
	\begin{align*}
		\|\Pi_hu-u_h\|_E
		&=
		\|\kappa_h(\Pi_hu-u_{hb})\|_E
		\\
		&\le
		C_{\kappa,E}
		\|\Pi_hu-u_{hb}\|_E.
	\end{align*}
	Hence, by the triangle inequality,
	\begin{align*}
		\|u-u_h\|_E
		&\le
		\|u-\Pi_hu\|_E
		+
		\|\Pi_hu-u_h\|_E
		\\
		&\le
		\|u-\Pi_hu\|_E
		+
		C_{\kappa,E}
		\|\Pi_hu-u_{hb}\|_E
		\\
		&\le
		(1+C_{\kappa,E})
		\|u-\Pi_hu\|_E
		+
		C_{\kappa,E}
		\|u-u_{hb}\|_E,
	\end{align*}
	which proves \eqref{eq:resolved_error_bound}.
	
	The interpolation estimates \eqref{eq:interp} imply
	\[
	\|u-\Pi_hu\|_E
	\le
	Ch.
	\]
	Moreover, Theorem~\ref{thm:combined_hb} gives
	\[
	\|u-u_{hb}\|_E
	\le
	Ch
	\qquad\text{and}\qquad
	A_h^{1/2}\le Ch.
	\]
	Substituting the first estimate into
	\eqref{eq:resolved_error_bound} proves
	\eqref{eq:resolved_energy_rate}. Finally,
	\[
	\left(
	\|u-u_h\|_E^2+A_h
	\right)^{1/2}
	\le
	\|u-u_h\|_E+A_h^{1/2}
	\le
	Ch,
	\]
	which proves \eqref{eq:resolved_combined_rate}.
\end{proof}

\begin{remark}
	\label{rem:resolved_scale_interpretation}
	Estimate \eqref{eq:resolved_energy_rate} shows that the projection onto
	the resolved space preserves the optimal energy-norm convergence rate
	of the enriched DD approximation. This has direct computational
	relevance because $u_h$ contains the globally coupled degrees of
	freedom, whereas the bubble component is eliminated locally by static
	condensation and recovered elementwise when required. Thus, static
	condensation does not degrade the asymptotic accuracy of the DD
	approximation.
	
	In addition, estimate \eqref{eq:resolved_combined_rate} shows that the
	resolved-scale energy error and the square-root artificial-dissipation
	measure $A_h^{1/2}$ both converge with first order.
\end{remark}

\section{Numerical experiments}
\label{sec:numerics}

The numerical experiments have two complementary purposes. First, a
smooth manufactured solution is used to verify the asymptotic behavior
predicted by the analysis for a fixed diffusion coefficient. Second,
the qualitative behavior of the DD formulation is examined for a
strongly advection-dominated problem with unresolved outflow layers.

Throughout this section, we monitor both the resolved component
$u_h=\kappa_h(u_{hb})$ and the enriched approximation $u_{hb}$. The
corresponding errors are denoted by
\begin{align*}
	e_{0,h}
	&:=\|u-u_h\|_0,
	&
	e_{1,h}
	&:=|u-u_h|_1,
	&
	E_h
	&:=\|u-u_h\|_E,
	\\
	e_{0,hb}
	&:=\|u-u_{hb}\|_0,
	&
	e_{1,hb}
	&:=|u-u_{hb}|_1,
	&
	E_{hb}
	&:=\|u-u_{hb}\|_E.
\end{align*}
We also evaluate the artificial dissipation $A_h$ and its associated
square-root measure $S_h$, defined in
\eqref{eq:Ah_definition} and \eqref{eq:Sh_definition}, respectively.
For comparison with the estimate of
Theorem~\ref{thm:combined_hb}, we introduce
\[
Q_{hb}:=E_{hb}+S_h.
\]
The quantities $Q_{hb}$ and
$\left(E_{hb}^2+A_h\right)^{1/2}$ are equivalent, since
\begin{equation*}
	\left(E_{hb}^2+A_h\right)^{1/2}
	\le
	Q_{hb}
	\le
	\sqrt{2}
	\left(E_{hb}^2+A_h\right)^{1/2}.
\end{equation*}
Consequently, they have the same asymptotic order.

For fixed problem data and fixed $\varepsilon>0$, the analysis predicts
\begin{equation*}
	E_h=O(h),
	\qquad
	E_{hb}=O(h),
	\qquad
	A_h=O(h^2),
	\qquad
	S_h=O(h),
	\qquad
	Q_{hb}=O(h).
\end{equation*}
The $L^2$ errors are also reported as numerical diagnostics. Since no
$L^2$ error estimate is established in the present work, the observed
$L^2$ rates are not used as verification of a theoretical
claim.

\subsection{Computational setting}
\label{subsec:computational_setting}

All computations use the same characteristic length and Dynamic Diffusion
scaling,
\begin{equation*}
h_T=\sqrt{|T|},
\qquad
\mu(h_T)=h_T.
\end{equation*}
On the shape-regular triangular meshes considered here,
$\sqrt{|T|}$ is uniformly equivalent to
$\operatorname{diam}(T)$, so this choice is consistent with the
characteristic mesh size employed in the theoretical estimates.

In both experiments, we use the structured mesh pattern denoted by Grid~1
in \cite{Santos-etal:2021}: the square domain is partitioned into congruent
squares, each divided along the diagonal connecting its lower-left and
upper-right vertices. The influence of different mesh patterns on the same
DD formulation was examined in \cite{Santos-etal:2021}, where comparable
convergence orders were observed across four structured and unstructured
grids. We therefore restrict the present experiments to this single mesh
family, since their primary purpose is to corroborate the theoretical results
established in the preceding sections.

The nonlinear DD problem is solved by a direct fixed-point iteration.
For any $v_h\in V_h^0$, let
\begin{equation*}
	\boldsymbol{\xi}(v_h)
	:=
	\left(
	\xi_T(v_h)
	\right)_{T\in\mathcal T_h}
\end{equation*}
denote the vector obtained by evaluating the local artificial-diffusion
coefficient on every element. The coefficient vector used at iteration
$k$ is denoted by
\[
\boldsymbol{\xi}^{(k)}
:=
\left(
\xi_T^{(k)}
\right)_{T\in\mathcal T_h}.
\]
The iteration is initialized with $\boldsymbol{\xi}^{(0)}=\boldsymbol{0}$. For a given $\boldsymbol{\xi}^{(k)}$, the corresponding statically
condensed linear problem is solved to obtain the resolved approximation
$u_h^{(k)}$. The coefficient vector for the next iteration is then
computed directly from this solution:
\begin{equation}
	\boldsymbol{\xi}^{(k+1)}
	=
	\boldsymbol{\xi}\bigl(u_h^{(k)}\bigr).
	\label{eq:fixed_point_update}
\end{equation}
No under-relaxation, coefficient freezing, continuation, or other
auxiliary convergence mechanism is employed. The regularization
parameter is fixed throughout at
\[
\tau=10^{-5}.
\]
The nonlinear iteration is monitored through the relative changes
\begin{align*}
	r_u^{(k)}
	&:=
	\frac{
		\|u_h^{(k+1)}-u_h^{(k)}\|_{\infty}
	}{
		\|u_h^{(k+1)}\|_{\infty}
	},
	&
	r_{\xi}^{(k)}
	&:=
	\frac{
		\|\boldsymbol{\xi}^{(k+1)}
		-\boldsymbol{\xi}^{(k)}\|_2
	}{
		\|\boldsymbol{\xi}^{(k+1)}\|_2
	}.
\end{align*}
Here, $\|\cdot\|_\infty$ and $\|\cdot\|_2$ denote the corresponding
vector norms. In addition, the relative fixed-point defect is defined
by
\begin{equation*}
	r_{\mathrm{FP}}^{(k)}
	:=
	\frac{
		\left\|
		\boldsymbol{\xi}^{(k+1)}
		-
		\boldsymbol{\xi}\bigl(u_h^{(k+1)}\bigr)
		\right\|_2
	}{
		\left\|
		\boldsymbol{\xi}\bigl(u_h^{(k+1)}\bigr)
		\right\|_2
	}.
\end{equation*}
The indicators have complementary interpretations. The quantity
$r_{\xi}^{(k)}$ measures the change in the coefficient vector relative
to the preceding iterate, while $r_{\mathrm{FP}}^{(k)}$ is the
normalized residual of the nonlinear coefficient relation. For the
direct update \eqref{eq:fixed_point_update}, they satisfy
\[
r_{\mathrm{FP}}^{(k)}
=
r_{\xi}^{(k+1)},
\]
so they contain the same coefficient variation with an index shift.
Nevertheless, $r_{\xi}^{(k)}$ is used to monitor the iteration history,
whereas $r_{\mathrm{FP}}^{(k)}$ directly verifies the consistency of
the final solution--coefficient pair. This distinction becomes
essential for relaxed iterations, in which the two indicators are no
longer related by a simple index shift. Although no relaxation is
employed here, both are reported to document the iterative convergence
and the residual of the nonlinear relation separately.

For a mesh sequence
$\{\mathcal T_{h_\ell}\}_{\ell\ge0}$, the observed order associated
with a positive quantity $X_h$ is computed as
\begin{equation*}
	p_X^{(\ell)}
	=
	\frac{
		\log\!\left(X_{h_{\ell-1}}/X_{h_\ell}\right)
	}{
		\log\!\left(h_{\ell-1}/h_\ell\right)
	}.
\end{equation*}

\subsection{Example 1: smooth manufactured solution}
\label{subsec:smooth_problem}

Let $\Omega=(0,1)^2$ and consider
\begin{align*}
-\varepsilon\Delta u
+\boldsymbol{\beta}\cdot\boldsymbol{\nabla}u
+\sigma u
&=f
&&\text{in }\Omega,\\
u&=0
&&\text{on }\partial\Omega,
\end{align*}
with $\boldsymbol{\beta}=(3,2)^T$, $\varepsilon=10^{-2}$ and $\sigma\in\{0,1\}$. The source term is chosen so that
\begin{equation*}
u(x,y)=\sin(\pi x)\sin(\pi y),
\end{equation*}
which gives
$$
f(x,y)
=
\bigl(2\varepsilon\pi^2+\sigma\bigr)
\sin(\pi x)\sin(\pi y)
+
3\pi\cos(\pi x)\sin(\pi y)
+
2\pi\sin(\pi x)\cos(\pi y).
$$
This benchmark is smooth, satisfies the homogeneous boundary condition
exactly, and is independent of $\varepsilon$ and $\sigma$. The diffusion coefficient is held fixed throughout the refinement sequence,
in accordance with the fixed-$\varepsilon$ character of the estimates proved
in the analysis.

We use structured triangular meshes obtained from $N\times N$ Cartesian
subdivisions with
\begin{equation*}
N=12,24,48,96,
\end{equation*}
each square being divided into two triangles. Thus,
$\#\mathcal T_h=2N^2$. Table~\ref{tab:smooth_meshes} reports the resulting
mesh sizes and local P\'eclet numbers. Since $Pe_T>1$ on all four meshes, the nonlinear artificial-diffusion
operator $D_h$ is active throughout the complete refinement sequence.

\begin{table}[htbp]
\centering
\caption{Meshes used in Example~1 for $\varepsilon=10^{-2}$.}
\label{tab:smooth_meshes}
\begin{tabular}{rrrrr}
\toprule
$N$ & Elements & $h$ & $Pe_{\max}$ & DD-active elements \\
\midrule
$12$ & $288$   & $5.8926\times10^{-2}$ & $10.623$ & $100\%$ \\
$24$ & $1152$  & $2.9463\times10^{-2}$ & $5.311$  & $100\%$ \\
$48$ & $4608$  & $1.4731\times10^{-2}$ & $2.656$  & $100\%$ \\
$96$ & $18432$ & $7.3657\times10^{-3}$ & $1.328$  & $100\%$ \\
\bottomrule
\end{tabular}
\end{table}

For this experiment, the nonlinear stopping criterion is
\begin{equation}
r_u^{(k)}<10^{-3},
\qquad
r_{\xi}^{(k)}<10^{-3},
\qquad
r_{\mathrm{FP}}^{(k)}<10^{-4}.
\label{eq:smooth_stopping}
\end{equation}

\subsubsection{Error convergence}

Table~\ref{tab:smooth_L2_errors} reports the $L^2$ errors of the resolved and
enriched approximations. The two approximations display nearly identical
asymptotic behavior, with observed orders approaching two under refinement.
On the finest mesh, the rates are $1.93$ and $1.92$ for $u_h$ and $u_{hb}$
when $\sigma=0$, and $1.99$ and $1.98$ when $\sigma=1$. These values are
reported only as empirical numerical evidence; no $L^2$ convergence estimate
is invoked or inferred from the present analysis.

\begin{table}[htbp]
\centering
\caption{$L^2$ errors for the smooth manufactured problem with
$\varepsilon=10^{-2}$.}
\label{tab:smooth_L2_errors}
\begin{tabular}{crrrrrr}
\toprule
$\sigma$ & Elements
& $e_{0,h}$ & $p_{0,h}$
& $e_{0,hb}$ & $p_{0,hb}$ \\
\midrule
\multirow{4}{*}{$0$}
& $288$ & $4.3757\times10^{-2}$ & -- & $4.3134\times10^{-2}$ & -- \\
& $1152$ & $1.3500\times10^{-2}$ & 1.70 & $1.3334\times10^{-2}$ & 1.69 \\
& $4608$ & $3.7758\times10^{-3}$ & 1.84 & $3.7364\times10^{-3}$ & 1.84 \\
& $18432$ & $9.9253\times10^{-4}$ & 1.93 & $9.8538\times10^{-4}$ & 1.92 \\
\midrule
\multirow{4}{*}{$1$}
& $288$ & $3.9791\times10^{-2}$ & -- & $3.8957\times10^{-2}$ & -- \\
& $1152$ & $1.0416\times10^{-2}$ & 1.93 & $1.0196\times10^{-2}$ & 1.93 \\
& $4608$ & $2.8735\times10^{-3}$ & 1.86 & $2.8249\times10^{-3}$ & 1.85 \\
& $18432$ & $7.2214\times10^{-4}$ & 1.99 & $7.1439\times10^{-4}$ & 1.98 \\
\bottomrule
\end{tabular}%
\end{table}

The energy-norm results are shown in
Table~\ref{tab:smooth_energy_errors}. For $\sigma=0$, the enriched rates are
$0.88$, $1.03$, and $1.25$, while for $\sigma=1$ they are $1.17$, $1.04$,
and $1.44$. The resolved approximation exhibits essentially the same
behavior. The corresponding $H^1$-seminorm orders are
$(0.88,1.02,1.24)$ and $(0.88,1.03,1.25)$ for $u_h$ and $u_{hb}$ when
$\sigma=0$, and $(1.02,0.98,1.41)$ and $(1.02,0.99,1.43)$ when
$\sigma=1$. Thus, both the resolved and enriched approximations are
consistent with the first-order energy behavior predicted by the analysis.
The rates moderately above one on the finest mesh are finite-mesh effects
and are not interpreted as evidence of a higher asymptotic order.

\begin{table}[htbp]
\centering
\caption{Energy errors for Example~1 with $\varepsilon=10^{-2}$.}
\label{tab:smooth_energy_errors}
\begin{tabular}{crrrrrr}
\toprule
$\sigma$ & Elements & $E_h$ & $p_{E_h}$ & $E_{hb}$ & $p_{E_{hb}}$ \\
\midrule
\multirow{4}{*}{$0$}
& $288$ & $7.1307\times10^{-2}$ & -- & $7.2776\times10^{-2}$ & -- \\
& $1152$ & $3.8708\times10^{-2}$ & 0.88 & $3.9487\times10^{-2}$ & 0.88 \\
& $4608$ & $1.9078\times10^{-2}$ & 1.02 & $1.9323\times10^{-2}$ & 1.03 \\
& $18432$ & $8.0913\times10^{-3}$ & 1.24 & $8.1007\times10^{-3}$ & 1.25 \\
\midrule
\multirow{4}{*}{$1$}
& $288$ & $7.5973\times10^{-2}$ & -- & $7.6416\times10^{-2}$ & -- \\
& $1152$ & $3.3536\times10^{-2}$ & 1.18 & $3.3880\times10^{-2}$ & 1.17 \\
& $4608$ & $1.6432\times10^{-2}$ & 1.03 & $1.6509\times10^{-2}$ & 1.04 \\
& $18432$ & $6.1342\times10^{-3}$ & 1.42 & $6.0787\times10^{-3}$ & 1.44 \\
\bottomrule
\end{tabular}
\end{table}

The nonlinear contribution is examined in
Table~\ref{tab:smooth_dissipation_errors}. For $\sigma=0$, the observed
orders of $A_h$ are $2.04$, $2.02$, and $1.86$, while those of $S_h$ are
$1.02$, $1.01$, and $0.93$. For $\sigma=1$, the corresponding values are
$2.23$, $2.05$, $1.83$ and $1.11$, $1.02$, $0.91$. The combined quantity
$Q_{hb}$ exhibits a particularly stable first-order trend: its observed
orders are $1.00$, $1.01$, $0.98$ for $\sigma=0$ and $1.12$, $1.03$,
$0.99$ for $\sigma=1$. Hence, the computations are consistent with
\[
A_h=O(h^2),
\qquad
S_h=O(h),
\qquad
Q_{hb}=O(h).
\]
As a consistency check, $p_S=p_A/2$ up to rounding, as follows directly from
$S_h=A_h^{1/2}$.

Figure~\ref{fig:smooth_DD_convergence_rates} provides a graphical summary
of these convergence trends.

\begin{table}[htbp]
\centering
\caption{Artificial dissipation and combined enriched quantity for
Example~1 with $\varepsilon=10^{-2}$.}
\label{tab:smooth_dissipation_errors}
\begin{tabular}{crrrrrrr}
\toprule
$\sigma$ & Elements
& $A_h$ & $p_A$
& $S_h$ & $p_S$
& $Q_{hb}$ & $p_{Q_{hb}}$ \\
\midrule
\multirow{4}{*}{$0$}
& $288$ & $1.1871\times10^{-1}$ & -- & $3.4455\times10^{-1}$ & -- & $4.1732\times10^{-1}$ & -- \\
& $1152$ & $2.8849\times10^{-2}$ & 2.04 & $1.6985\times10^{-1}$ & 1.02 & $2.0934\times10^{-1}$ & 1.00 \\
& $4608$ & $7.1083\times10^{-3}$ & 2.02 & $8.4311\times10^{-2}$ & 1.01 & $1.0363\times10^{-1}$ & 1.01 \\
& $18432$ & $1.9619\times10^{-3}$ & 1.86 & $4.4294\times10^{-2}$ & 0.93 & $5.2394\times10^{-2}$ & 0.98 \\
\midrule
\multirow{4}{*}{$1$}
& $288$ & $1.1864\times10^{-1}$ & -- & $3.4444\times10^{-1}$ & -- & $4.2086\times10^{-1}$ & -- \\
& $1152$ & $2.5332\times10^{-2}$ & 2.23 & $1.5916\times10^{-1}$ & 1.11 & $1.9304\times10^{-1}$ & 1.12 \\
& $4608$ & $6.1175\times10^{-3}$ & 2.05 & $7.8215\times10^{-2}$ & 1.02 & $9.4723\times10^{-2}$ & 1.03 \\
& $18432$ & $1.7252\times10^{-3}$ & 1.83 & $4.1536\times10^{-2}$ & 0.91 & $4.7615\times10^{-2}$ & 0.99 \\
\bottomrule
\end{tabular}%
\end{table}

\begin{figure}[htbp]
\centering
\begin{minipage}{0.48\textwidth}
\centering
\includegraphics[width=\linewidth]{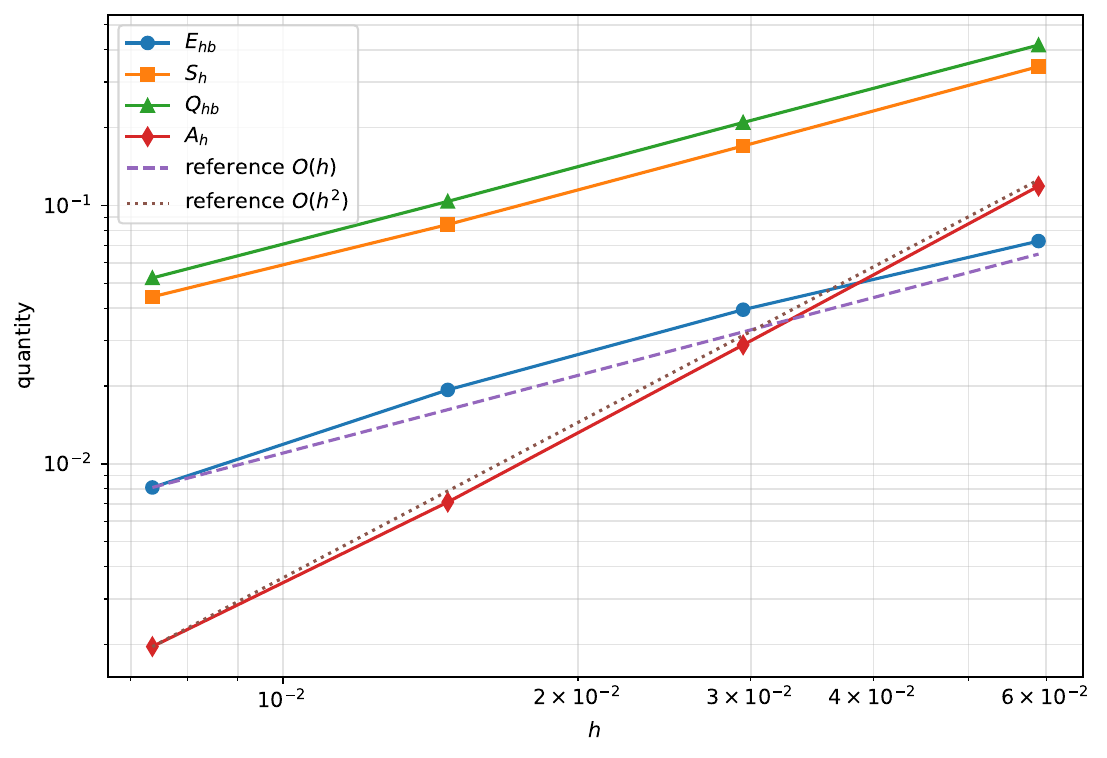}
\end{minipage}\hfill
\begin{minipage}{0.48\textwidth}
\centering
\includegraphics[width=\linewidth]{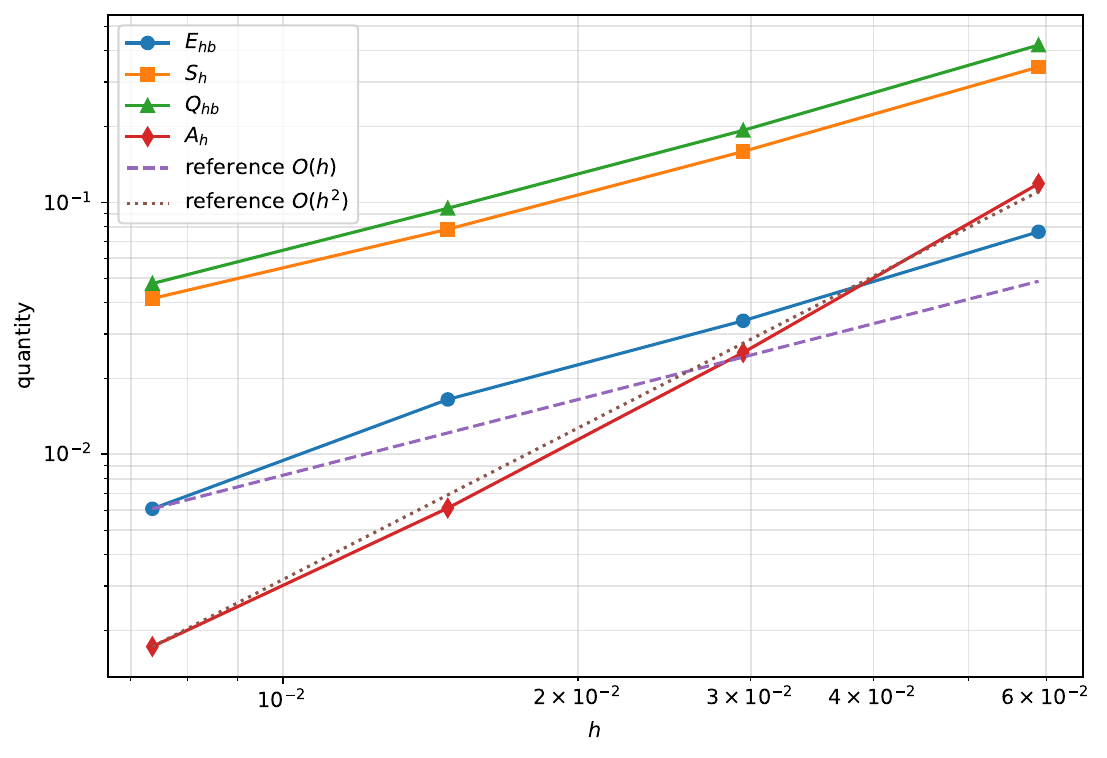}
\end{minipage}
\caption{Convergence of the enriched energy error and the nonlinear DD
quantities for $\varepsilon=10^{-2}$: $\sigma=0$ (left) and $\sigma=1$
(right). Reference slopes $O(h)$ and $O(h^2)$ indicate the rates predicted
by the analysis.}
\label{fig:smooth_DD_convergence_rates}
\end{figure}

Table~\ref{tab:smooth_nonlinear_iterations} reports the nonlinear iteration
counts and the final stopping indicators. All computations satisfy
\eqref{eq:smooth_stopping} using the same fixed-point iteration,
without under-relaxation, freezing, or continuation. The iteration counts vary with the mesh and
with $\sigma$, as expected for a nonlinear coefficient update, but this
variation does not alter the observed spatial convergence trends.

\begin{table}[htbp]
\centering
\caption{Nonlinear iteration diagnostics for Example~1.}
\label{tab:smooth_nonlinear_iterations}
\begin{tabular}{crrrrr}
\toprule
$\sigma$ & Elements & Iter. & $r_u$ & $r_{\xi}$ & $r_{\mathrm{FP}}$ \\
\midrule
\multirow{4}{*}{$0$}
& $288$ & 173 & $5.00e-06$ & $5.14e-05$ & $9.92e-05$ \\
& $1152$ & 316 & $1.56e-05$ & $5.02e-05$ & $9.59e-05$ \\
& $4608$ & 355 & $4.08e-05$ & $5.04e-05$ & $9.64e-05$ \\
& $18432$ & 290 & $3.74e-05$ & $5.82e-05$ & $9.97e-05$ \\
\midrule
\multirow{4}{*}{$1$}
& $288$ & 288 & $1.04e-06$ & $5.05e-05$ & $9.94e-05$ \\
& $1152$ & 145 & $1.33e-05$ & $5.06e-05$ & $9.85e-05$ \\
& $4608$ & 154 & $3.20e-05$ & $5.06e-05$ & $8.72e-05$ \\
& $18432$ & 216 & $1.66e-05$ & $5.36e-05$ & $1.00e-04$ \\
\bottomrule
\end{tabular}
\end{table}

\subsection{Example 2: strongly advection-dominated outflow layers}
\label{subsec:layer_problem}

The second experiment examines the same DD formulation in a regime in which
the physical layers are far thinner than the mesh resolution. We consider
\begin{align*}
-\varepsilon\Delta u
+\boldsymbol{\beta}\cdot\boldsymbol{\nabla}u
&=f
&&\text{in }\Omega=(0,1)^2,\\
u&=0
&&\text{on }\partial\Omega,
\end{align*}
with $\boldsymbol{\beta}=(1,1)^T$ and $\sigma=0$.
Define
\begin{equation*}
g_\varepsilon(t)
=
t-
\frac{e^{t/\varepsilon}-1}
{e^{1/\varepsilon}-1}.
\end{equation*}
The exact solution is
\begin{equation*}
u(x,y)
=
g_\varepsilon(x)\,g_\varepsilon(y).
\end{equation*}
This boundary-layer benchmark has also been considered in recent
analyses of DD formulations~\cite{Du-etal:2026,Du-etal:2026b}. Here it
is used for the DD formulation with $\sigma=0$ and on
a severely under-resolved uniform mesh, in order to illustrate its
behavior in the strongly advection-dominated regime.

Since $g_\varepsilon(0)=g_\varepsilon(1)=0$, the boundary condition is
satisfied on the whole boundary. Moreover,
\[
-\varepsilon g_\varepsilon''(t)
+
g_\varepsilon'(t)
=
1,
\]
and therefore the corresponding source term is
\begin{equation*}
f(x,y)
=
g_\varepsilon(x)+g_\varepsilon(y).
\end{equation*}
For $\varepsilon\ll1$, the solution is close to $xy$ in the interior and
develops exponential outflow layers adjacent to $x=1$ and $y=1$.

We take
\begin{equation*}
\varepsilon=10^{-5}
\end{equation*}
and use a structured $40\times40$ mesh, corresponding to $3200$ triangular
elements. Since
\[
h_T=\frac{1}{40\sqrt{2}},
\qquad
\|\boldsymbol{\beta}\|_{0,\infty,T}=\sqrt{2},
\]
the local P\'eclet number is
\begin{equation*}
Pe_T=1250
\qquad
\forall T\in\mathcal T_h.
\end{equation*}
Thus, the nonlinear artificial-diffusion operator is active throughout the mesh and the computation lies
deep in the advection-dominated regime. 

For this qualitative test, the nonlinear iteration is stopped when
\begin{equation*}
r_u^{(k)}<10^{-2},
\qquad
r_{\xi}^{(k)}<10^{-2},
\qquad
r_{\mathrm{FP}}^{(k)}<10^{-2}.
\end{equation*}
Table~\ref{tab:layer_diagnostics} summarizes the computation.

\begin{table}[htbp]
\centering
\caption{Diagnostic quantities for Example~2.}
\label{tab:layer_diagnostics}
\begin{tabular}{lr}
\toprule
Quantity & Value \\
\midrule
$\varepsilon$ & $10^{-5}$ \\
Elements & $3200$ \\
$Pe_T$ & $1250$ \\
DD-active elements & $100\%$ \\
Nonlinear iterations & $22$ \\
Final $r_u$ & $3.54e-04$ \\
Final $r_{\xi}$ & $5.10e-03$ \\
Final $r_{\mathrm{FP}}$ & $4.80e-03$ \\
$\min u_h$ & $0.0000e+00$ \\
$\max u_h$ & $8.2177e-01$ \\
$\|u-u_h\|_0$ & $8.4439e-02$ \\
\bottomrule
\end{tabular}
\end{table}

Figure~\ref{fig:layer_3d} compares the exact and resolved DD solutions using
the same three-dimensional viewpoint.
\begin{figure}[htbp]
\centering
\begin{minipage}{0.48\textwidth}
\centering
\includegraphics[width=\linewidth]{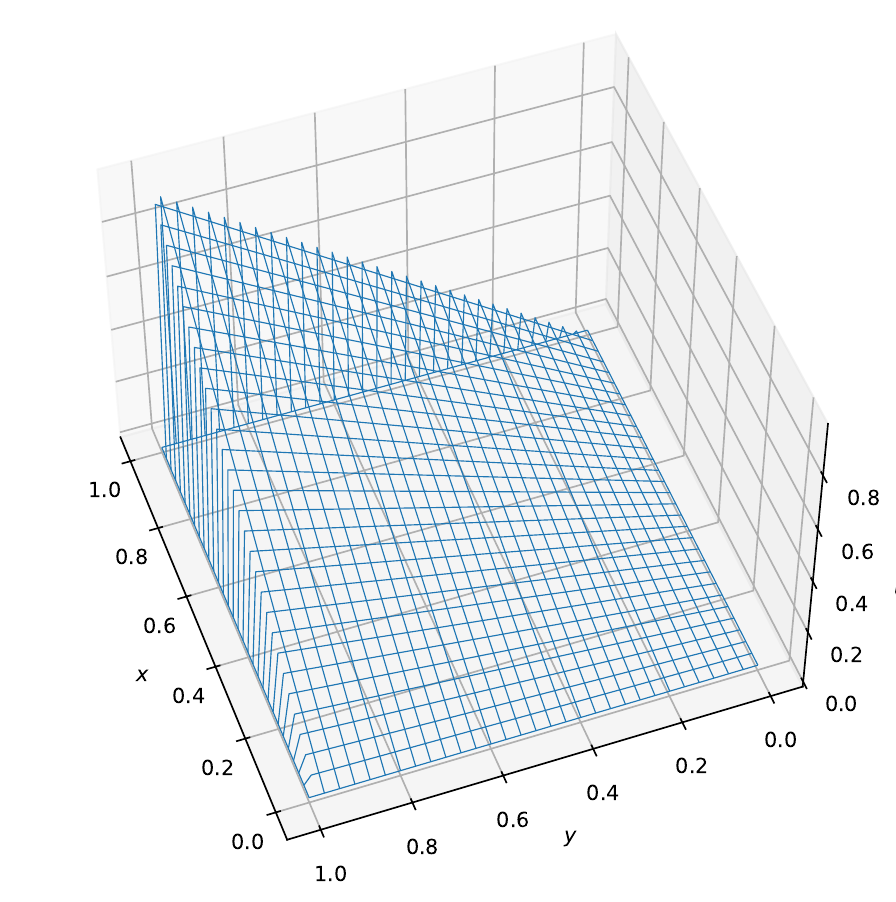}

\small (a) Exact solution
\end{minipage}
\hfill
\begin{minipage}{0.48\textwidth}
\centering
\includegraphics[width=\linewidth]{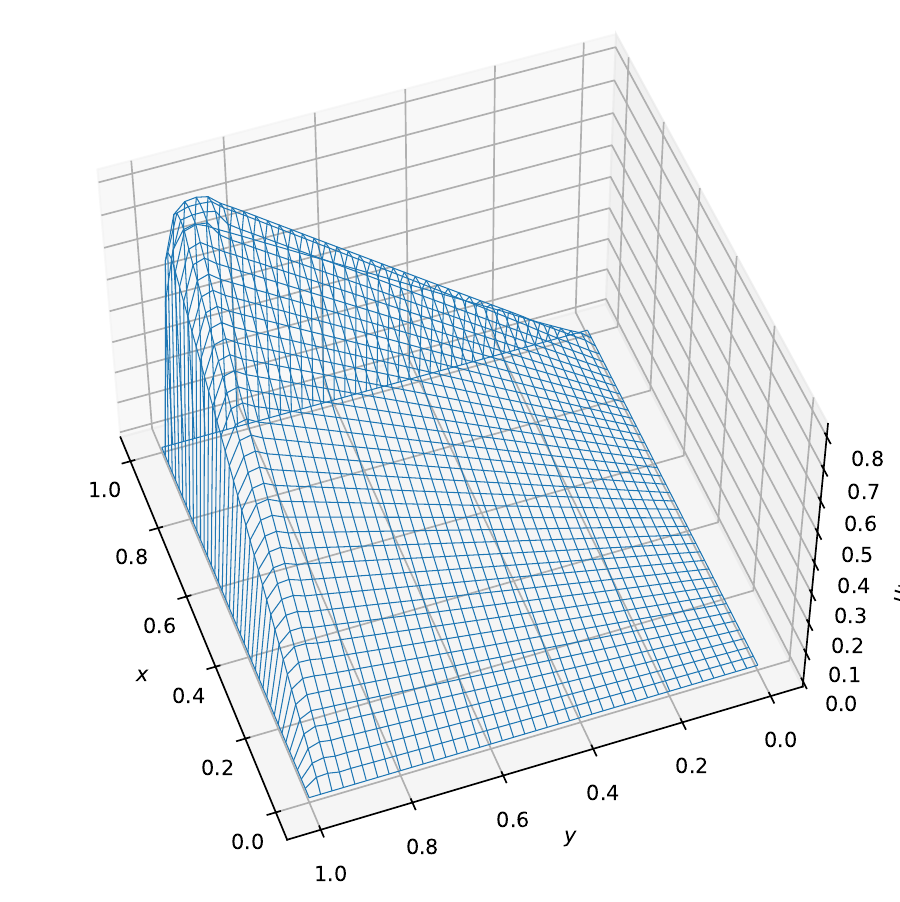}

\small (b) DD approximation
\end{minipage}
\caption{Example~2 with $\varepsilon=10^{-5}$: exact solution and resolved
DD approximation on the $40\times40$ mesh.}
\label{fig:layer_3d}
\end{figure}
The horizontal cut at $y=0.5$ provides a more direct assessment of the
outflow layer. As shown in Figure~\ref{fig:layer_cut}, the DD approximation
tracks the exact solution closely over most of the domain and reproduces the
location of the sharp transition at $x=1$. The transition is numerically
broadened over the last few mesh intervals, as expected because the physical
layer thickness, $O(\varepsilon)$, is several orders of magnitude smaller
than the element size.

\begin{figure}[htbp]
\centering
\includegraphics[width=0.72\textwidth]{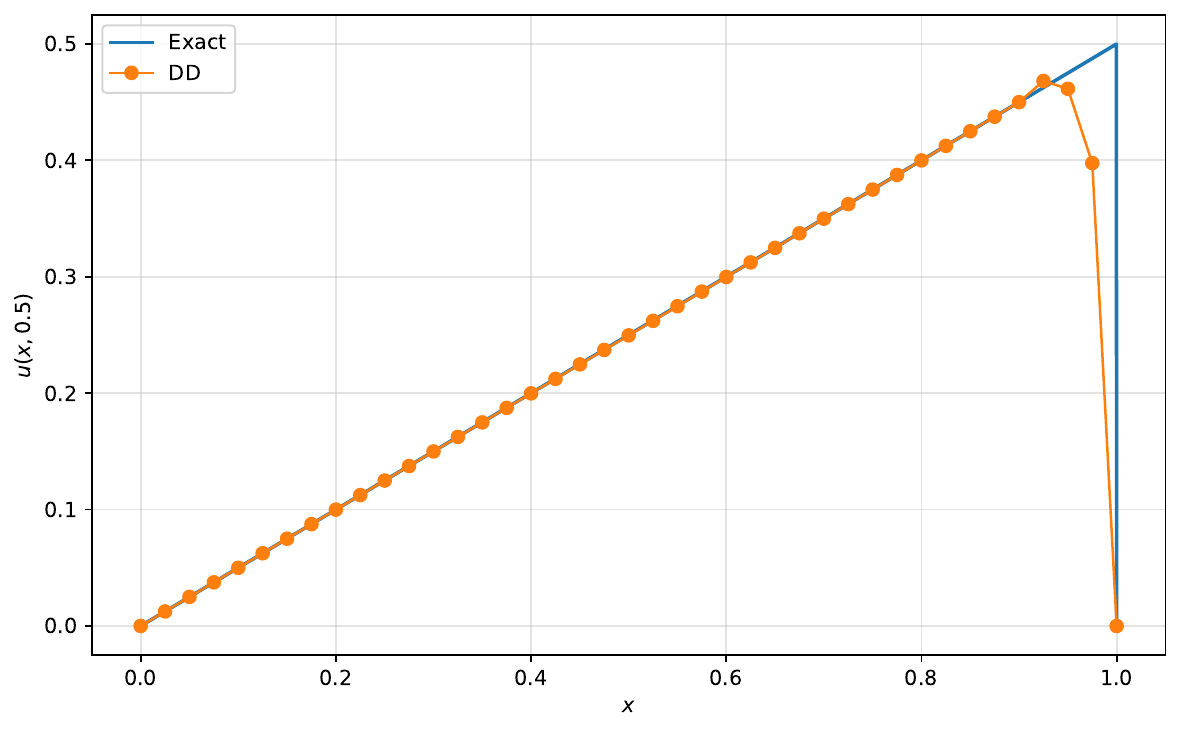}
\caption{Example~2: horizontal cut at $y=0.5$ comparing the exact solution
and the resolved DD approximation for $\varepsilon=10^{-5}$.}
\label{fig:layer_cut}
\end{figure}

No undershoot, overshoot, or visible spurious oscillation is observed in this
computation, and the approximation remains smooth in the interior. The price
paid for the added nonlinear diffusion is a noticeable broadening of the
unresolved outflow layer.
The experiment shows that the same DD coefficient scaling used in Example~1
yields a stable approximation in a severely under-resolved
advection-dominated regime, with no additional tuning and with convergence of
the fixed-point iteration according to all three stopping indicators.

Together, Examples~1 and~2 address two distinct numerical questions. The first
tests the fixed-$\varepsilon$ asymptotic behavior predicted by the analysis,
whereas the second examines the same nonlinear artificial-diffusion mechanism
in a severely under-resolved transport regime. In both cases, the coefficient
scaling, regularization parameter, and nonlinear
iteration are kept unchanged.

\paragraph{Code availability.}
The Python code used to generate the numerical results reported in this
section is publicly available at

\begin{center}
	\url{https://github.com/isaacpsantos/DD-Convergence-Analysis-2026}
\end{center}

\noindent
The repository contains a self-contained notebook that can be executed
directly in Google Colab.

\section{Conclusions}

This work advances the mathematical analysis of the Dynamic Diffusion
(DD) finite element formulation studied in
\cite{Santos-etal:2021}, without modifying its discrete structure. We
first establish an explicit local Lipschitz estimate for the artificial
diffusivity on the resolved-scale space, with a constant of order
$h_T$. The stability properties of the resolved-scale projection then
extend this estimate to the full enriched finite element space. The
resulting bound yields uniqueness of the discrete solution for
sufficiently fine meshes.

For continuous piecewise linear finite elements enriched with simplex
bubble functions, the enriched DD approximation and its resolved
component converge with the optimal first-order rate in the energy
norm. Moreover, the square-root artificial-dissipation measure
$S_h=A_h^{1/2}$ satisfies $S_h=O(h)$ while active elements remain
present. Once the local Péclet criterion deactivates the artificial
diffusion throughout the mesh, $A_h$ vanishes exactly. Consequently,
the energy-error and artificial-dissipation measure is also $O(h)$,
sharpening the previously available $O(h^{1/2})$ estimate for the DD
formulation.

The numerical experiments corroborate the analytical predictions for
both $\sigma=0$ and $\sigma=1$. For the smooth manufactured problem,
the computed solutions exhibit the expected first-order behavior in
the energy norm, while the square-root artificial-dissipation measure
displays the decay predicted by the analysis. The local Péclet number
remains larger than one throughout the mesh sequence, so the observed
convergence occurs while the nonlinear artificial-diffusion operator
$D_h$ remains active. The computations employ the DD formulation
without under-relaxation, coefficient freezing, continuation, or any
other auxiliary mechanism for the nonlinear iteration. The $L^2$
errors also exhibit second-order behavior, although no $L^2$ error
estimate is established in the present work.

An advection-dominated problem with sharp outflow layers
further illustrates the stabilizing behavior of the DD formulation in
a regime characterized by a large local Péclet number. Although this
experiment is not intended as an additional convergence test, it
complements the theoretical analysis by showing that the method remains
effective in the presence of strongly advective features.

These results close the gap between the previously available a priori
bound and the first-order behavior observed for the DD formulation.
They also show that optimal finite element accuracy is compatible with
the nonlinear, locally activated artificial-diffusion mechanism.
Further developments may address locally refined uniqueness conditions
and parameter-robust estimates for singularly perturbed problems.

\section*{Declaration of Generative AI and AI-Assisted Technologies in the Writing Process}

During the preparation of this work, the author used ChatGPT (OpenAI) for language polishing and editorial organization of the manuscript. After using this tool, the author thoroughly reviewed and edited the content as needed and takes full and exclusive responsibility for the scientific content of the published article.

\bibliographystyle{unsrt}
\bibliography{references}

\end{document}